\newtheorem{theo}{Theorem}[section]
\newtheorem{prop}[theo]{Proposition}
\newtheorem{corol}[theo]{Corollary}
\newtheorem{lem}[theo]{Lemma}
\newtheorem{defin}[theo]{Definition}
\newenvironment{proof}{{\flushleft \em Proof : }}{\hfill $\square$ \vspace{5mm}}
\newcommand{\N}{\mathbb N}
\newcommand{\R}{\mathbb R}
\begin{document}

\title{Generic measures for hyperbolic flows on non compact spaces}

\author{Yves Coudene, Barbara Schapira}

\date{}

\maketitle

\parindent=0pt

\begin{center}
\emph{IRMAR, Universit\'e Rennes 1, campus Beaulieu, b\^at.23
35042 Rennes cedex, France}\\
\emph{LAMFA, Universit\'e Picardie Jules Verne, 33 rue St Leu, 80000 Amiens, France}
\end{center}

\begin{abstract}
We consider the geodesic flow on a complete connected 
negatively curved manifold.
We show that the set of invariant borel probability measures 
contains a dense $G_\delta$-subset consisting of 
ergodic measures fully supported on the non-wandering set.
We also treat the case of non-positively curved manifolds
and provide general tools to deal with hyperbolic systems
defined on non-compact spaces. 
\footnote{37B10, 37D40, 34C28}{}
\end{abstract}



\section{Introduction}

In the context of hyperbolic dynamics, Axiom A flows admit many
ergodic invariant probability measures supported on 
each connected component $X$ of  the non-wandering set : 
the measure of maximal entropy and the SRB measure
are two examples which provide many informations on the asymptotic 
behaviour of trajectories. K. Sigmund \cite{si} 
showed even more: the ergodic probability measures with full support 
on $X$ form a $G_\delta$ dense subset of the set ${\cal M}^1(X)$ 
of all probability invariant measures. 

\medskip

\quad
Our goal is to extend these results to hyperbolic systems 
defined on non-compact spaces. We build our study on two remarkable 
properties of hyperbolic systems :
the existence of a product structure and the closing lemma.
Neither of these two properties relies on the compactness of 
the ambient space. Yet they are sufficient to obtain a general
density theorem, which shows that ergodic probability 
measures of full support are indeed abundant.

\medskip

\quad
The most famous example of hyperbolic system 
with noncompact phase space 
is given by the geodesic flow on a complete 
connected negatively curved manifold.
Our general density theorem implies the following result :

\begin{theo}\label{mainresult} 
Let $M$ be a negatively curved, connected, complete riemannian manifold. 
We assume that the non wandering set $\Omega$ of 
the geodesic flow is non empty.  
Then the $G_{\delta}$ subset of 
ergodic measures fully supported on $\Omega$ 
is dense in the set of all probability borel
measures on $T^1M$ invariant by the geodesic flow. 
\end{theo}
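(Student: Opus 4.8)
\emph{Sketch of proof.} The plan is to deduce Theorem~\ref{mainresult} from the general density theorem announced in the introduction, whose hypotheses are topological in nature: a local product structure on the non-wandering set together with the closing lemma, the phase space being required only to be a Polish space, so that the ambient space of invariant measures is Baire and the notion of a dense $G_\delta$ is meaningful. The work therefore reduces to verifying these hypotheses for the geodesic flow on $T^1M$; the interest, by contrast with Sigmund's compact setting, is that $\Omega$ may genuinely fail to be compact. First, $T^1M$ is Polish: a connected Riemannian manifold is paracompact, hence second countable and metrizable, so $T^1M$ is a second-countable, locally compact, Hausdorff space, hence Polish. Consequently the set ${\cal M}^1$ of geodesic-flow-invariant Borel probability measures, endowed with the weak-$*$ topology associated to $C_b(T^1M)$, is a closed subset of the Polish space of all Borel probability measures on $T^1M$, hence is itself Polish and in particular a Baire space. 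By Poincar\'e recurrence, $\mu$-almost every vector is recurrent --- hence non-wandering --- for every $\mu\in{\cal M}^1$; thus $\mu(\Omega)=1$, so ${\cal M}^1={\cal M}^1(\Omega)$ and "fully supported on $\Omega$" means $\mathrm{supp}\,\mu=\Omega$. Finally ${\cal M}^1\neq\emptyset$, since the closing lemma (recalled below) produces periodic orbits in $\Omega$ whenever $\Omega\neq\emptyset$, hence periodic-orbit measures.

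Second, the local product structure and the closing lemma for the geodesic flow in negative curvature are classical and, crucially, local in nature. Near any $v\in T^1M$, the strong stable horosphere $W^{ss}(v)$, the strong unstable horosphere $W^{uu}(v)$ and the flow line are transverse and give continuous local product coordinates; the associated local bracket --- intersecting the local strong stable manifold of one vector with the local weak unstable manifold of a nearby one --- is well defined and depends continuously on its two arguments, because horoballs are convex when $K<0$. The closing lemma asserts that a vector whose orbit returns $\varepsilon$-close to itself after a time $T$ that is bounded below is shadowed on $[0,T]$ by a genuine periodic vector of period close to $T$; read in the quotient, this is essentially the statement that two geodesics of the universal cover which stay close are asymptotic. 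None of this uses any compactness of $M$ or of $\Omega$; in particular it yields that periodic orbits are dense in $\Omega$, which is exactly what is needed to build measures of full support. Granting the general density theorem, its conclusion is precisely the assertion of Theorem~\ref{mainresult}.

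It is worth recalling the mechanism of the general theorem, since that is where the actual content lies. Full support is a $G_\delta$ condition: for a countable basis $(U_n)$ of $\Omega$ one has $\{\mu:\mathrm{supp}\,\mu=\Omega\}=\bigcap_n\{\mu:\mu(U_n)>0\}$, each set on the right being open by lower semicontinuity of $\mu\mapsto\mu(U_n)$. Ergodicity is a $G_\delta$ condition as well: for a countable algebra ${\cal D}\subset C_b(T^1M)$ generating the Borel $\sigma$-algebra, $\mu$ is ergodic iff $\inf_{T>0}\big\|\frac1T\int_0^T h\circ\phi_t\,dt\big\|_{L^2(\mu)}^2=\big(\int h\,d\mu\big)^2$ for every $h\in{\cal D}$, and the left-hand side is an upper-semicontinuous function of $\mu$ that always dominates the continuous right-hand side, so each of these equality sets is $G_\delta$. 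For density, given a target $\mu\in{\cal M}^1$ one first approximates it in the weak-$*$ topology by a finite convex combination of periodic-orbit measures (the closing lemma applied to $\mu$-generic vectors), then adds, with arbitrarily small total mass, a measure $\sum_n\varepsilon_n\delta_{{\cal O}_n}$ carried by periodic orbits ${\cal O}_n$ with dense union in $\Omega$, so that the support becomes all of $\Omega$; finally one uses the local product structure to splice the finitely many orbits involved into a single long periodic orbit, producing an ergodic measure that is still close to $\mu$ and still of full support.

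The hard point, then, is the non-uniformity forced by non-compactness: on a non-compact $\Omega$ the contraction and expansion rates of the flow --- hence the sizes of the product charts and the shadowing constants in the closing lemma --- need not be bounded, so the splicing argument must be organized so that at each stage only finitely many orbits intervene and only local, never uniform, versions of these two properties are used. Setting up this bookkeeping is exactly what the general tools developed in the paper provide.
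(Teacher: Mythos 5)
Your overall plan --- reduce to the general density theorem of Section~4 and verify its hypotheses (Polish space, transitivity, local product structure, closing lemma) for the geodesic flow in negative curvature --- is exactly the paper's strategy, and the preliminary observations (that $T^1M$ is Polish, that every invariant probability measure is carried by $\Omega$ via Poincar\'e recurrence, and that full support and ergodicity are $G_\delta$ conditions) are sound. Your ergodicity criterion via the $L^2$-norm of the Birkhoff average of each $h$ in a countable generating algebra is a legitimate alternative to the paper's, which instead writes the complement of the ergodic measures as a countable union of closed sets $F_{k,l,i}$ of nontrivially decomposable barycenters; both give a $G_\delta$.

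The density step, however, is not correct as written, and the error obscures where the actual work lies. You propose to approximate $\mu$ by a finite convex combination of periodic-orbit measures, then add a small mass $\sum_n\varepsilon_n\delta_{{\cal O}_n}$ over a \emph{countable} dense family of closed orbits to force full support, and finally ``splice the finitely many orbits involved into a single long periodic orbit'' to recover ergodicity. But after the second step the measure charges infinitely many closed orbits, so there is no ``finitely many orbits'' to splice; and any measure produced by splicing is a Dirac measure on a single closed geodesic, which is compact and so can never have support equal to a non-compact $\Omega$. No direct construction of this kind can produce an ergodic measure of full support.

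The mechanism you are missing is that the Baire category theorem does the heavy lifting: one never exhibits the generic measure explicitly. The paper shows (a) full-support measures form a dense $G_\delta$ (the argument you correctly sketch), and, independently, (b) ergodic measures form a $G_\delta$ which is dense --- density of ergodics following from the chain: the closing lemma makes periodic-orbit measures dense among ergodic measures; Choquet's decomposition theorem (where the Polish/standard hypothesis is actually used) makes finite convex combinations of periodic-orbit measures dense in all of ${\cal M}^1$; the local product structure collapses each such finite combination to a single periodic orbit. Baire then gives that the intersection of (a) and (b) is a dense $G_\delta$, which is the statement. A secondary gap in your sketch is the omission of Choquet: ``the closing lemma applied to $\mu$-generic vectors'' only handles ergodic $\mu$; passing from ergodic measures to arbitrary invariant measures requires the ergodic decomposition.
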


As the construction ultimately relies on the Baire category
theorem, the measures obtained are not explicit.
This result was previously known for geodesic flows on 
geometrically finite negatively curved manifolds. 
In that setting,  
there is an explicit construction of an
ergodic invariant probability Gibbs measure \cite{cou2}.

\medskip

\quad 
On the other hand, the theorem does not hold 
in the context of non-positive curvature.
We give a counterexample, and proceed to show that 
ergodic probability measures of full support are dense, 
if we assume that the non-wandering set is everything
and there is a hyperbolic closed geodesic.

\medskip

\quad 
Other examples of hyperbolic systems on non-compact manifolds are 
obtained by lifting an Anosov or Axiom A flow to some non-compact cover of 
the phase space. Our density result applies verbatim and gives 
the density of ergodic measures of full support, if we restrict
the system to a connected component of the non-wandering set on the cover.

\medskip
\quad
The set of all borel probability measures invariant by a dynamical
system will be denoted by ${\cal M}^1(X)$ in the sequel. 
It is endowed with the weak topology: a sequence $\mu_n$ of probability measures
converges to some measure $\mu$ if $\int f d\mu_n\rightarrow \int f d\mu$
for all bounded continuous $f$.
We refer to the classical treatise 
of Billingsley \cite{bi} for a detailed study of that topology.
In particular, we will use the fact that 
we may restrict ourselves to bounded \emph{Lipschitz} functions
in the definition of weak convergence.

\medskip
\quad
In the first two sections, using first the closing lemma, and then the local
product structure of a hyperbolic flow, we prove the density of the Dirac
measures supported on periodic orbits in the set of all invariant probability
measures. In section 4 we prove theorem \ref{mainresult}. 
We then extend this result to rank one
manifolds, and to hyperbolic systems on covers.
We end by an appendix containing a proof of the closing lemma 
for rank one vectors on non-positively curved manifolds.


\section{The closing lemma and ergodic measures}

The geodesic flow on the unit tangent bundle of a negatively curved manifold 
satisfies a property known as a closing lemma.
Roughly speaking, a piece of orbit that comes back close to its 
initial point is in fact close to a periodic orbit.
Here is a more formal statement :

\begin{defin}\label{closing_lemma} A flow $\phi_t$ on a metric space $X$ satisfies the {\em closing
 lemma} if for all points $v\in X$, there exists a neighborhood
 $V$ of $v$ which satisfies :\\
 for all $\varepsilon>0$, there exists a $\delta>0$ and a $t_0>0$
 such that for all $x\in V$ and all $t>t_0$ with $d(x,\phi_t x)<\delta$ and 
 $\phi_t x\in V$,
 there exists $x_0$ and $l>0$, with  $|l-t|<\varepsilon$,
 $\phi_{l}x_0=x_0$, and $d(\phi_s x_0,\phi_s x)<\varepsilon$
 for $0<s<\min(t,l)$.
\end{defin}

Note that for such flow, the non-wandering set and
the closure of the recurrent points coincide
with the closure of the set of  periodic points. 

\medskip

The first closing lemma was proven by Hadamard in 1898,
for negatively curved surfaces embedded in ambient space.
D. V. Anosov generalized the closing lemma to the systems that now
bear his name \cite{an} ; for this reason, it is often called the 
Anosov closing lemma. A proof of the closing lemma valid for
rank one manifolds and higher rank locally symmetric spaces 
can be found in \cite{eb1}. 
We recall its argument at the
end of the paper.

\begin{lem}\label{density_periodic_orbits_in_erg_probas} 
Let  $X$ be a metric space, $\phi_t$ a flow on $X$ 
which satisfies the closing lemma.
Then the set of Dirac probability measures 
supported on periodic orbits of the flow
is dense in the set of all invariant 
ergodic borel probability measures $\mu$ on~$X$.
\end{lem}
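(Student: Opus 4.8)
Let me think about this. We have a flow satisfying the closing lemma, and an ergodic invariant probability measure $\mu$. We want to approximate $\mu$ in the weak topology by measures $\frac{1}{l}\int_0^l \delta_{\phi_s x_0}\, ds$ where $x_0$ is a periodic point of period $l$.

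The natural tool is the Birkhoff ergodic theorem. Since $\mu$ is ergodic, for $\mu$-a.e. point $x$, the time averages $\frac{1}{T}\int_0^T f(\phi_s x)\, ds \to \int f\, d\mu$ for a countable dense (in the bounded Lipschitz functions) family of $f$. Actually more: for $\mu$-a.e. $x$, the empirical measures $\frac{1}{T}\int_0^T \delta_{\phi_s x}\, ds$ converge weakly to $\mu$. We also want $x$ to be recurrent — but by Poincaré recurrence, $\mu$-a.e. point is recurrent. And since $\mu$ is a probability measure, we can find a point $v$ and a neighborhood $V$ with $\mu(V) > 0$; by the closing lemma $V$ can be taken to be the "good" neighborhood from Definition 1.2.

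So the plan: fix $\varepsilon > 0$ and finitely many bounded Lipschitz functions $f_1, \dots, f_k$. Pick a generic point $x$ (Birkhoff + recurrent) lying in a set of positive measure inside the closing-lemma neighborhood $V$ of some point $v$. Since $x$ is recurrent and returns to $V$, there are arbitrarily large times $t$ with $\phi_t x \in V$ and $d(x, \phi_t x) < \delta$. For such large $t$, the closing lemma gives a periodic point $x_0$ of period $l$ with $|l - t| < \eta$ and $\phi_s x_0$ staying $\eta$-close to $\phi_s x$ for $0 < s < \min(t,l)$.

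Now I compare three quantities: the periodic orbit average $\frac{1}{l}\int_0^l f_j(\phi_s x_0)\, ds$, the orbit segment average $\frac{1}{t}\int_0^t f_j(\phi_s x)\, ds$, and $\int f_j\, d\mu$. The last comparison is controlled by Birkhoff for $t$ large. The middle–to–first comparison splits: on $[0, \min(t,l)]$ the integrands differ by at most $(\mathrm{Lip}\, f_j)\cdot \eta$ by the shadowing estimate; the leftover interval has length $|l - t| < \eta$, contributing at most $\eta \cdot \sup|f_j|$ times a $1/l$ factor; and the mismatch between $1/t$ and $1/l$ normalization contributes $O(\eta/t)$. Choosing $\eta$ small (depending on $\varepsilon$ and the Lipschitz constants and sups of the $f_j$), then $t$ large, makes all three gaps small. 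This shows the periodic Dirac measure is within $\varepsilon$ of $\mu$ on each $f_j$, hence weak-close.

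The main obstacle — and the only genuinely delicate point — is ensuring simultaneously that (i) $x$ is $\mu$-generic for Birkhoff, (ii) $x$ lies in the closing-lemma neighborhood $V$, and (iii) $x$ returns to $V$ at arbitrarily large times with the return point $\delta$-close to $x$. Point (iii) does \emph{not} follow from mere recurrence of $x$; recurrence gives $\phi_{t_n} x \to x$ for some sequence $t_n \to \infty$, which does give $d(x, \phi_{t_n} x) < \delta$ and $\phi_{t_n} x \in V$ for $n$ large, so in fact recurrence \emph{is} enough — I should double-check that the definition of closing lemma only needs one such large return time, which it does. So really the only thing to verify carefully is that the generic set (full measure for Birkhoff) intersects $V$ in a set of positive measure and that Poincaré recurrence applies there; since $\mu(V) > 0$ this is immediate, and by ergodicity I can even arrange that $\mu$-a.e. point of $V$ is generic. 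I would also remark that it suffices to work with a countable dense family of bounded Lipschitz functions, as noted in the introduction, so that a single full-measure generic set handles all test functions at once. Finally, one should note the degenerate case where $\mu$ is itself already a periodic orbit measure, which is trivial, and the case $\mu$ supported at a fixed point, handled similarly with $l \to 0$ excluded by taking $t > t_0$.
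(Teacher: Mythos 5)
Your proposal is correct and follows essentially the same route as the paper's own proof: Poincar\'e recurrence plus Birkhoff for an ergodic $\mu$-generic point in a positive-measure closing-lemma neighborhood, then the closing lemma to produce a periodic orbit, then a three-term estimate using the Lipschitz constant on the overlap, the boundedness of $f$ on the leftover interval of length $|t-l|$, and the $1/t$ versus $1/l$ normalization mismatch. If anything you are slightly more careful than the paper about the two points it elides, namely that recurrence only yields a sequence of return times (not all large $t$) and that one must ensure the Birkhoff-generic recurrent point actually lies in a neighborhood $V$ with $\mu(V)>0$ to which the closing lemma applies.
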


\begin{proof}
Let $f_i$ be finitely many bounded Lipschitz functions on $X$,
with Lipschitz constants bounded by $K$.
From the Poincar\'e recurrence theorem and the 
Birkhoff ergodic theorem, we know that for almost all $x\in X$,
$x$ is recurrent and the Birkhoff averages 
of $f_i$ evaluated at $x$ 
converges to $\int f_i\,d\mu$. Fix $\varepsilon>0$ and choose 
$t_0>0$ large enough so that  for $t\ge t_0$, 
$d(\phi_t x,x)<\delta$ and for all $i$, 
$\left|\frac{1}{t}\int_0^t f_i\circ \phi_s(x)ds-\int_X f_i\,d\mu\right|\le \varepsilon$. 
Let $x_0$ be the periodic point of period $l$ given by the closing lemma. 
We obtain
$$
\vbox{
\halign{$#$ &#\  $\leq$\ & $#$\hfill\cr
\left| \int f_i\,d\mu-{1\over l}\int_0^l f_i(\phi_sx_0)\,ds \right| 
&&
\left| {1\over t}\int_0^t f_i(\phi_s(x))\,ds
- {1\over l}\int_0^l f_i(\phi_sx_0)\,ds \right|+\varepsilon \cr 
&&
{1\over t}\int_0^t K\, d(\phi_sx,\phi_sx_0)\,ds+2 |t-l|\,||f_i||_\infty
+\varepsilon \cr 
&&
\vphantom{\Big|}(K+2||f_i||_\infty+1)\,\varepsilon\cr}
}
$$
This shows that the Dirac measure on that periodic 
orbit is close to the measure $\mu$.
\end{proof}

\quad If the space $X$ is {\em borel standard}, 
that is, a borel subset of a separable complete metric space,
the decomposition theorem of Choquet 
implies that the convex set generated by the invariant 
ergodic borel probability measures is dense in the set of all invariant
probability measures. We have proven:



\begin{corol}\label{density_convex_periodic_in_proba_measures} 
Assume moreover that $X$ is borel standard.
Then the convex subset 
generated by the normalized Dirac measures 
on periodic orbits is dense in 
the set of all invariant borel probability measures ${\cal M}^1(X)$.
\end{corol}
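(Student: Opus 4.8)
The plan is to compose the two density statements that are now available. Lemma \ref{density_periodic_orbits_in_erg_probas} gives density of the normalized periodic Dirac measures inside the set of ergodic invariant probability measures, and --- because $X$ is borel standard --- the ergodic (Choquet) decomposition quoted just above the statement gives density of the convex hull of the ergodic measures inside all of ${\cal M}^1(X)$. So it suffices to observe that a convex combination of approximations is an approximation of the corresponding convex combination, which is immediate since the weak topology is tested against finitely many bounded continuous (indeed Lipschitz) functions and $\mu\mapsto\int f\,d\mu$ is affine.

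Concretely, I would argue as follows. Fix $\mu\in{\cal M}^1(X)$, bounded Lipschitz functions $f_1,\dots,f_n$, and $\varepsilon>0$; these data describe a basic weak neighbourhood of $\mu$. First, using density of the convex hull of the ergodic measures, pick ergodic measures $\mu_1,\dots,\mu_k$ and weights $\lambda_i\ge 0$ with $\sum_i\lambda_i=1$ such that $\bigl|\int f_j\,d\mu-\sum_i\lambda_i\int f_j\,d\mu_i\bigr|<\varepsilon/2$ for all $j$. Next, apply Lemma \ref{density_periodic_orbits_in_erg_probas} to each $\mu_i$, with the same functions $f_1,\dots,f_n$ and tolerance $\varepsilon/2$, to obtain a normalized Dirac measure $\nu_i$ on a periodic orbit with $\bigl|\int f_j\,d\mu_i-\int f_j\,d\nu_i\bigr|<\varepsilon/2$ for all $j$. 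Then $\nu:=\sum_i\lambda_i\nu_i$ lies in the convex set generated by the periodic Dirac measures, and the triangle inequality
\[
\Bigl|\int f_j\,d\mu-\int f_j\,d\nu\Bigr|\le\Bigl|\int f_j\,d\mu-\sum_i\lambda_i\int f_j\,d\mu_i\Bigr|+\sum_i\lambda_i\Bigl|\int f_j\,d\mu_i-\int f_j\,d\nu_i\Bigr|<\varepsilon
\]
shows that $\nu$ lies in the chosen neighbourhood of $\mu$. Since $\mu$ and the neighbourhood were arbitrary, density follows.

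I do not expect a serious obstacle, since both ingredients are already in hand; the only point that deserves care is the one the authors themselves flag just before the statement, namely that Choquet's theorem really does yield approximation by \emph{finite} convex combinations of ergodic measures even though ${\cal M}^1(X)$ need not be weak-$*$ compact here, the ambient space $X$ being non-compact. Granting that --- as we are entitled to, since it is asserted above --- the rest is the routine affine bookkeeping just sketched.
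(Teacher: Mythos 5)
Your argument is exactly the paper's: combine Lemma \ref{density_periodic_orbits_in_erg_probas} with the Choquet-decomposition density of the convex hull of ergodic measures, and observe that the two approximations compose because integration against finitely many bounded Lipschitz test functions is affine in the measure. The paper leaves this composition implicit ("We have proven:"), whereas you spell out the triangle-inequality bookkeeping, but it is the same proof.
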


There is another path that leads to the previous result:
first prove a Lifsic type theorem \cite{li}; that is, 
all functions whose integrals
on periodic orbit vanish are coboundaries. For these functions,
deduce that the integrals with respect to all invariant probability 
measures is zero; this requires
additional work since the function given by the coboundary equation 
cannot be expected to be integrable when the underlying space is not compact.
Then end the argument by applying a Hahn-Banach theorem adapted to the 
 weak topology on the space of measures.

\section{Product structure and Dirac measures}

Our next goal is to show that Dirac measures on periodic orbits
are in fact dense in the set of all invariant measures.
This will require  an additional assumption.
 Let $X$ be a metric space, $\phi_t$ a continuous flow on $X$. 
 The strong stable sets of the flow are defined by~:

\smallskip

 $W^{ss}(x):=\lbrace \  y\in X \ | \ 
 \lim_{t\rightarrow \infty}
 d(\phi_{t}(x),\phi_{t}(y)) = 0 \ \  \rbrace$ ; 
 \\
 $W^{ss}_{\varepsilon}(x):=\lbrace \  y\in W^{ss}(x) \ | \ 
 d(\phi_{t}(x),\phi_{t}(y))\leq \varepsilon$ for all $t\ge 0 
 \ \  \rbrace$.

\smallskip

  One also defines the strong unstable sets 
 $W^{su}$ and $W^{su}_{\varepsilon}$ of 
 $\phi_t$ ; these are the stable sets of 
 $\phi_{-t}$.


\begin{defin}\label{local_product_structure} \
The flow ${\phi}_t$ is said to admit a 
{\em local product structure} if all points $v\in X$ have   
a neighborhood $V$ which satisfies : for all
$\varepsilon>0$, there exists a positive constant $\delta$,
such that for all $x,y\in V$ with  $d(x,y)\leq \delta$, 
there is a point $<x,y>\in X$, 
a real number $t$ with $|t|\leq \varepsilon$, so that: 
$$<x,y>\ \in\ {W}^{su}_{\varepsilon}\bigl({\phi}_{t}(x)\bigr)\cap 
{W}^{ss}_{\varepsilon}(y).$$
\end{defin}

This property is summarized by the 
following picture~:

\medskip




\begin{center}
\epsfig{figure=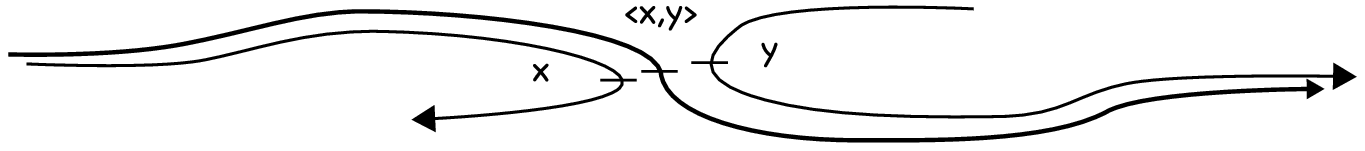,width=0.8\textwidth,angle=0}
\end{center}

\medskip

This assumption allows us to ``glue'' orbits together.
Note that we can glue an arbitrary finite number of
pieces of orbits, just by using the product structure 
recursively. In the next picture,
we first glue the trajectory of $x_1$ with the trajectory of 
$x_2$. The resulting dotted trajectory is then glued with $x_3$.
We end up with an orbit which follows first the orbit of $x_1$,
then the orbit of $x_2$ and finally the orbit of~$x_3$.




\begin{center}
\epsfig{figure=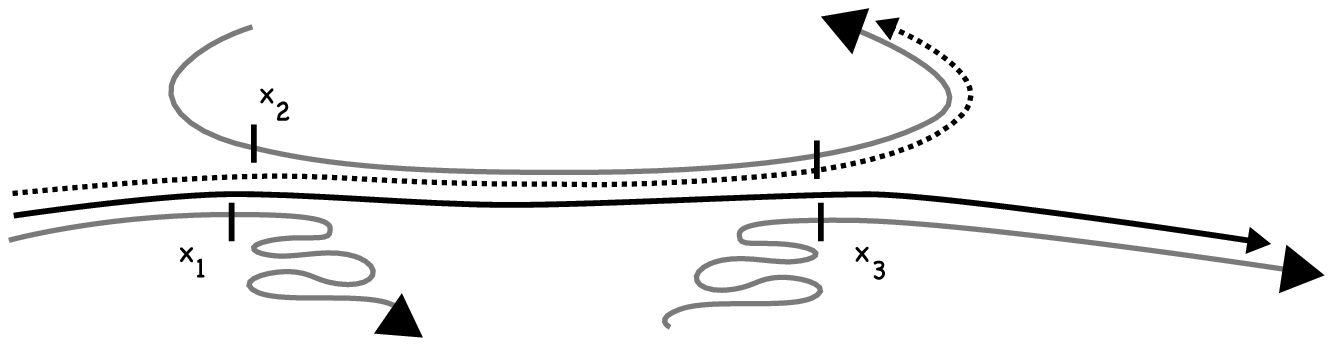,width=0.7\textwidth,angle=0}
\end{center}

\medskip

\quad
Geodesic flows on negatively curved manifolds admit a 
local product structure. In fact, there is even a global 
product structure on the unit tangent bundle of the
universal cover of the manifold. 

\medskip

\quad We are now ready to prove :

\begin{prop} Let  $X$ be a metric space, $\phi_t$ a transitive flow on $X$ 
admitting a local product structure and satisfying the closing lemma.
Then the set of normalized Dirac measures on periodic orbits is dense
in its convex closure.
\end{prop}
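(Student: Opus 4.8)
Since the convex closure of the set $D$ of normalized Dirac measures on periodic orbits is the closure of the convex hull of $D$, and the convex hull of a set is always dense in its closure, it is enough to approximate, in the weak topology, an arbitrary finite convex combination of elements of $D$ by a single element of $D$. So fix bounded Lipschitz functions $f_1,\dots,f_m$ with Lipschitz constants $\le K$, a number $\eta>0$, and $\mu=\sum_{i=1}^n\alpha_i\,\delta_{{\cal O}_i}$ with each ${\cal O}_i$ a periodic orbit of period $T_i$ (we may assume all $\alpha_i>0$); the goal is a periodic orbit ${\cal O}$ with $\bigl|\int f_j\,d\delta_{\cal O}-\sum_i\alpha_i\int f_j\,d\delta_{{\cal O}_i}\bigr|<\eta$ for all $j$. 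First I would fix a point $x_1\in{\cal O}_1$ and a small $\varepsilon>0$ (to be pinned down at the end in terms of $\eta,K,\max_j\|f_j\|_\infty$ and $n$), and let $V,\delta,t_0$ be produced by the closing lemma of Definition~\ref{closing_lemma} at the point $x_1$ for this $\varepsilon$.

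The heart of the argument is to build, by gluing, a long orbit segment that shadows ${\cal O}_1$ for a proportion close to $\alpha_1$ of the time, then ${\cal O}_2$ for a proportion close to $\alpha_2$, and so on, and finally returns near $x_1$; the closing lemma then turns it into an honest periodic orbit. Using transitivity I would first pick, for each $i$ (indices taken cyclically, so ${\cal O}_{n+1}={\cal O}_1$), a point $x_i\in{\cal O}_i$, a point $y_i$ close to $x_i$, and a time $\tau_i>0$ with $\phi_{\tau_i}(y_i)$ close to $x_{i+1}$; the ``bridges'' $\phi_{[0,\tau_i]}(y_i)$ have fixed finite lengths $\tau_i$. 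Then, for a large integer $N$, I would set $k_i=\lceil N\alpha_i/T_i\rceil$, so that $k_iT_i=N\alpha_i+O(1)$, and apply the local product structure of Definition~\ref{local_product_structure} recursively — exactly the gluing procedure described and pictured after that definition — to glue, in cyclic order, the segment looping $k_1$ times around ${\cal O}_1$, the bridge from near $x_1$ to near $x_2$, the segment looping $k_2$ times around ${\cal O}_2$, the next bridge, \dots, the segment looping $k_n$ times around ${\cal O}_n$, and the last bridge back to near $x_1$. This produces an orbit segment $\phi_{[0,T]}(z)$ with $T=\sum_ik_iT_i+\sum_i\tau_i+O(n\varepsilon)=N+O(1)$, whose forward orbit shadows — within a constant governed by the product-structure parameter — that entire itinerary. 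By taking the product-structure parameter and the points $y_i$ close enough to the $x_i$, and then $N$ large enough, I can arrange simultaneously that $z,\phi_T(z)\in V$ with $d(z,\phi_Tz)<\delta$, that $T>t_0$, that the shadowing along the loops is within $\varepsilon$, and that $\sum_i|k_iT_i/T-\alpha_i|$ and $\sum_i\tau_i/T$ are as small as desired.

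Finally I would apply the closing lemma to $z\in V$ and $T>t_0$: it yields a periodic point $x_0$ of period $l$ with $|l-T|<\varepsilon$ and $d(\phi_sx_0,\phi_sz)<\varepsilon$ for $0<s<\min(T,l)$; let ${\cal O}$ be its orbit. The same computation as in the proof of Lemma~\ref{density_periodic_orbits_in_erg_probas} bounds $\bigl|\int f_j\,d\delta_{\cal O}-\frac1T\int_0^Tf_j(\phi_sz)\,ds\bigr|$ by a quantity controlled by $\varepsilon$, $K$ and $\|f_j\|_\infty$; and cutting $[0,T]$ along the itinerary of $z$ gives $\frac1T\int_0^Tf_j(\phi_sz)\,ds=\sum_i\frac{k_iT_i}{T}\cdot\frac1{T_i}\int_0^{T_i}f_j(\phi_sx_i)\,ds$ up to an error bounded by $K\varepsilon+\|f_j\|_\infty\sum_i\tau_i/T$, while $\frac1{T_i}\int_0^{T_i}f_j(\phi_sx_i)\,ds=\int f_j\,d\delta_{{\cal O}_i}$. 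Since $k_iT_i/T\to\alpha_i$, choosing $\varepsilon$ small and $N$ large then gives $\bigl|\int f_j\,d\delta_{\cal O}-\sum_i\alpha_i\int f_j\,d\delta_{{\cal O}_i}\bigr|<\eta$ for every $j$. The one place I expect real care to be needed is the bookkeeping in the middle step: the constants must be fixed in the right order — the accuracy $\varepsilon$ first (which fixes $V,\delta,t_0$ via the closing lemma), then the gluing parameter and the closeness of the bridges to the $x_i$, then the integer $N$ — so that the glued segment is simultaneously long enough for the closing lemma, returns $\delta$-close to its start while staying in $V$, is $\varepsilon$-shadowed by the orbit of $z$, and spends a fraction $\alpha_i+o(1)$ of its length near each ${\cal O}_i$ with the bridges occupying a vanishing fraction. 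Everything else is routine, and the final estimates are precisely those used for Lemma~\ref{density_periodic_orbits_in_erg_probas}.
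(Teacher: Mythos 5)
Your proof is essentially the paper's: transitivity supplies bridge segments between the periodic orbits, the local product structure is applied recursively to glue loops around each ${\cal O}_i$ together with the bridges into one long orbit segment, the closing lemma closes it up into a genuine periodic orbit, and the final estimate is exactly that of Lemma~\ref{density_periodic_orbits_in_erg_probas}. Your choice of loop counts $k_i=\lceil N\alpha_i/T_i\rceil$, which keeps the time spent near ${\cal O}_i$ within $O(1)$ of $N\alpha_i$ without first rationalizing the coefficients, is somewhat cleaner than the paper's ``$Np_i$ loops of period $l_i$'' bookkeeping (which, read literally together with the displayed averaging formula, would weight ${\cal O}_i$ by $p_il_i$ rather than $p_i$), but the strategy and all the real ingredients are identical.
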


From this proposition and  corollary
\ref{density_convex_periodic_in_proba_measures},
we deduce that the 
set of normalized Dirac measures on periodic orbits is dense in the set 
of all invariant probability measures ${\cal M}^1(X)$. 


\begin{proof} 
We show that any convex combination of Dirac measures on periodic 
orbits can be approached by a Dirac measure on a single periodic 
orbit. So, let $x_1$, $x_3$,...,$x_{2n-1}$ be periodic points 
with period $l_1$, $l_3$,...,$l_{2n-1}$, and $c_1$,$c_3$,...,$c_{2n-1}$
positive real numbers with $\Sigma\, c_{2i+1}=1$.
Let us denote the Dirac measure on a periodic orbit of 
some point $x$ by $\delta_x$.
We want to find a periodic point $x$ such that 
$\delta_x$ is close to the sum $\Sigma\, c_{2i+1}\,\delta_{x_i}$.
The numbers $c_{2i+1}$ may be assumed to be rational numbers of the 
form $p_{2i+1}/q$.

\medskip

By transitivity, for each integer $i$ less than $n$, 
we can find a point $x_{2i}$ close to $x_{2i-1}$
whose trajectory becomes close to $x_{2i+1}$, say, after time 
$t_{2i}$. We can also find a point $x_{2n2}$ close to 
$x_{2n-1}$ whose trajectory becomes close to $x_1$ after some time.

\medskip




\begin{center}
\epsfig{figure=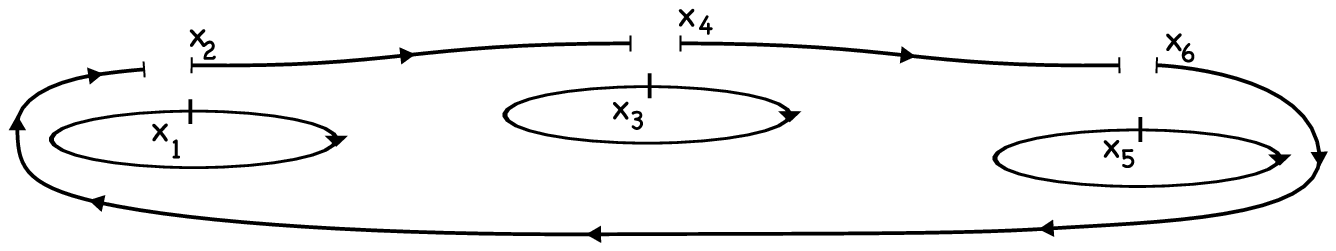,width=0.8\textwidth,angle=0}
\end{center}

\medskip

Now these trajectories are glued together. We fix an integer $N$,
that will be chosen big at the end. First glue the piece of orbit
starting from $x_1$, of length $Nl_1p_1$, together with 
the orbit of $x_2$, of length $t_2$. The resulting orbit ends 
in a neighborhood of $x_3$, and that neighborhood does not depend 
on the value of $N$. This orbit is glued with the trajectory 
starting from $x_3$, of length $Nl_2p_2$, and so on. The 
reader may want to write down the sequence of 
$\varepsilon$ obtained through the successive gluings. 
This is done in \cite{cou}, where a similar argument was used. 

\medskip

We end up with a trajectory starting close to $x_1$, doing $N$
turns around the first periodic orbit, then following the trajectory of
$x_2$ until it reaches $x_3$ ; 
then it turns $N$ times around the second periodic 
orbit, and so on, until it reaches $x_{2n}$ and goes back to $x_1$.

\medskip




\begin{center}
\epsfig{figure=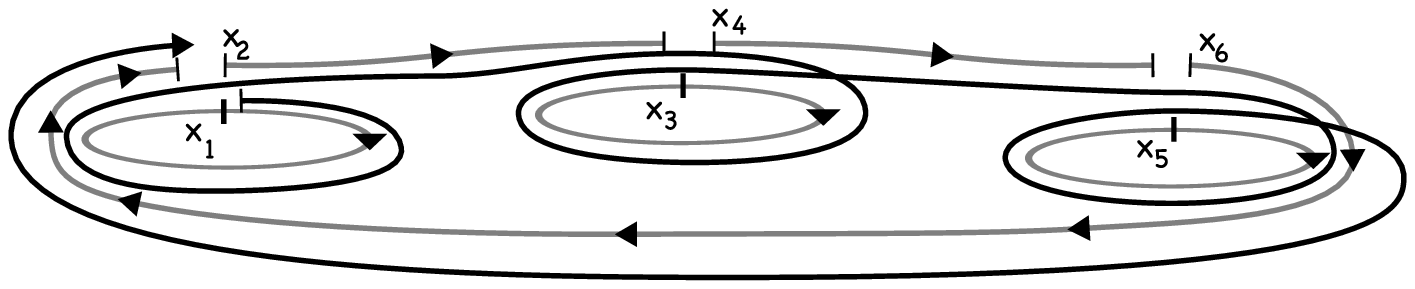,width=0.8\textwidth,angle=0}
\end{center}

\medskip
For sake of clarity, the trajectory only makes one turn around 
the periodic orbits on the pictures.
Finally, we use the closing lemma to obtain the periodic orbit we are
looking for.

\medskip




\begin{center}
\epsfig{figure=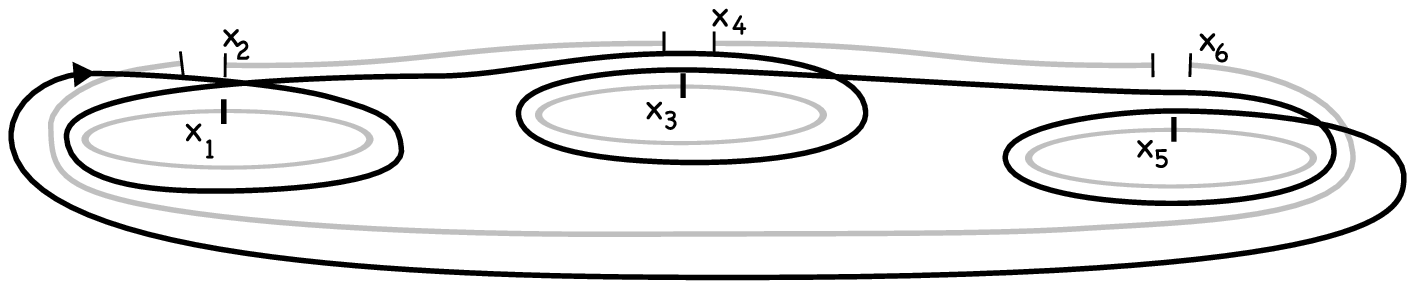,width=0.8\textwidth,angle=0}
\end{center}

\medskip

Integrating a function on that closed orbit gives a result close to
what we would have obtained if we integrate first $Np_1$ times around
the first closed orbit, then on the piece of orbit leading to $x_3$,
followed by the second periodic orbit, $Np_2$ times, and so on.
This amounts to integrating with respect to the measure:
$$
{\Sigma\ N\,p_{2i+1}\,\delta_{x_{2i+1}}\ + \ \Sigma\ \delta_{x_{2i}}\over
\Sigma\ N\,p_{2i+1}\ +\ \Sigma\ t_{2i}}
$$

If $N$ is big enough, the contribution of the pieces of orbits 
associated to the $x_{2i}$ is small, and this measure is close to
 $\Sigma\, {p_{2i+1}\over q}\,\delta_{x_{2i}}$; so we are done
\end{proof}

{\bf Remark :} \\
Under the hypothesis of the previous proposition, 
 transitivity is equivalent to the connectedness of $X$ together
with the density of the periodic orbits on $X$ \cite{cou}.


\section{Ergodic measures with full support}

We first prove that there are always probability measures with
full support, as soon as there are enough periodic orbits.
Recall that a \emph{polish space} is a topological space homeomorphic
to a separable complete metric space. Locally compact separable 
metric spaces are polish and the set of borel
probability measures on a polish space is again a polish space
with respect to the weak topology.
Hence, the Baire property holds in that context.

\begin{lem} Let $X$ be a polish space, 
$\phi_t$ a flow defined on $X$.
We assume that the periodic orbits of the flow form a dense 
subset in $X$. Then the set of borel probability measures of full
support is a dense $G_{\delta}$  subset of the set of all invariant
borel probability measures. 
\end{lem}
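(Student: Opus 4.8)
The plan is to exhibit the set of full-support measures as a countable intersection of open dense sets. Fix a countable base $\{U_n\}_{n\in\N}$ of nonempty open subsets of $X$; such a base exists because $X$ is polish, hence second countable. For each $n$ define
$$
\mathcal{G}_n \ :=\ \bigl\{\ \mu\in{\cal M}^1(X)\ :\ \mu(U_n)>0\ \bigr\}.
$$
A probability measure has full support precisely when it charges every nonempty open set, and since every nonempty open set contains some $U_n$, this is equivalent to $\mu\in\bigcap_n\mathcal{G}_n$. So it suffices to show each $\mathcal{G}_n$ is open and dense in ${\cal M}^1(X)$, after which the Baire category theorem (valid here, as noted in the excerpt, because the space of borel probability measures on a polish space is polish) finishes the argument.

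For \emph{openness}: the map $\mu\mapsto\mu(U_n)$ is lower semicontinuous for the weak topology, since $U_n$ is open; indeed one can write $\mathbf{1}_{U_n}$ as an increasing limit of bounded continuous (in fact Lipschitz) functions $g_k$ with $g_k\le\mathbf{1}_{U_n}$, and then $\{\mu:\mu(U_n)>0\}=\bigcup_k\{\mu:\int g_k\,d\mu>0\}$ is a union of open sets, hence open. For \emph{density}: given any $\mu\in{\cal M}^1(X)$ and any weak neighborhood of $\mu$, I want to perturb $\mu$ to a nearby invariant measure that charges $U_n$. By hypothesis the periodic orbits are dense, so $U_n$ contains a periodic point $p$; let $\nu$ be the normalized Dirac measure on its orbit, which is an invariant probability measure with $\nu(U_n)>0$. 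Then for small $t\in(0,1]$ the convex combination $(1-t)\mu+t\nu$ is invariant, lies within the prescribed weak neighborhood of $\mu$ for $t$ small enough (weak convergence of $(1-t)\mu+t\nu\to\mu$ as $t\to0$ is immediate from testing against bounded continuous functions), and satisfies $\bigl((1-t)\mu+t\nu\bigr)(U_n)\ge t\,\nu(U_n)>0$, so it belongs to $\mathcal{G}_n$. Hence $\mathcal{G}_n$ is dense.

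The one genuine point requiring care — and the place where the density hypothesis on periodic orbits is used — is the density step: we need \emph{some} invariant probability measure charging each $U_n$, and the periodic orbit through a point of $U_n$ supplies it. (Without an invariance-compatible source of mass in $U_n$ the perturbation argument would fail; this is exactly why the hypothesis is phrased in terms of periodic orbits rather than, say, merely asking $X$ to equal its non-wandering set.) Everything else is soft: openness is lower semicontinuity of $\mu\mapsto\mu(U_n)$, and the passage from ``each $\mathcal{G}_n$ open dense'' to ``$\bigcap_n\mathcal{G}_n$ dense $G_\delta$'' is the Baire theorem in the polish space ${\cal M}^1(X)$. I would also remark that combining this lemma with the previous proposition and corollary (density of Dirac measures on periodic orbits in ${\cal M}^1(X)$) shows the hypothesis is met for transitive hyperbolic flows, setting up the proof of Theorem \ref{mainresult}.
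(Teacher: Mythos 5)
Your proof is correct and follows essentially the same route as the paper's: you write the set of full-support measures as $\bigcap_n\mathcal{G}_n$ where $\mathcal{G}_n=\{\mu:\mu(U_n)>0\}$, which is exactly the complement of the paper's closed sets $F_i$, and the density step (convex combination with the normalized Dirac measure on a periodic orbit meeting $U_n$) is identical. You supply a bit more detail than the paper on the openness step, spelling out the lower semicontinuity of $\mu\mapsto\mu(U_n)$, but the argument is the same.
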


\begin{proof}
Let $\{U_i\}_{i\in{\bf N}}$ be a countable basis of open sets for the 
topology of $X$. 
Let $F_i$ be the closed set of  all  probability measures 
which are zero on $U_i$.
The complement of the union of all the $F_i$ is exactly the set 
of borel probability measures with full support.
Let us show that the $F_i$ have empty interiors.
Take $\nu\in F_i$ and $x_0$ a periodic point in $U_i$;
the Dirac measure on the orbit of $x_0$ is denoted by $\delta_{x_0}$.
Then the sequence $(1-{1\over n})\nu+{1\over n}\delta_{x_0}$ is not 
in $F_i$ and converges to $\nu$. The Baire theorem can now be 
applied and the lemma is proven.
\end{proof}

\quad
We can now prove a genericity result concerning 
ergodic invariant measures with full support.

\begin{theo}\label{generalresult} Let  $X$ be a polish space, 
$\phi_t$ a transitive continuous flow on $X$ 
admitting a local product structure and satisfying the closing lemma.
Then the set of invariant ergodic borel probability measures with full 
support is a dense $G_{\delta}$ subset of the set of all invariant
borel probability measures.
\end{theo}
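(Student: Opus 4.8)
The plan is to combine the two genericity statements already available in the excerpt. From the Proposition of Section~3 together with Corollary~\ref{density_convex_periodic_in_proba_measures}, the normalized Dirac measures on periodic orbits are dense in ${\cal M}^1(X)$; in particular the periodic orbits themselves are dense in $X$ (any point is in the support of some nearby periodic Dirac measure). Hence the previous lemma applies and tells us that the set of borel probability measures of \emph{full support} is a dense $G_\delta$ in ${\cal M}^1(X)$. It remains to intersect this with a dense $G_\delta$ of \emph{ergodic} measures, because a countable intersection of dense $G_\delta$ sets in the polish space ${\cal M}^1(X)$ is again a dense $G_\delta$ by the Baire category theorem.

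So the real work is to show that the set ${\cal E}$ of ergodic invariant probability measures is itself a dense $G_\delta$ in ${\cal M}^1(X)$. Density of ${\cal E}$ is immediate: by Lemma~\ref{density_periodic_orbits_in_erg_probas} the Dirac measures on periodic orbits are dense in the ergodic measures, and by the Proposition they are dense in all of ${\cal M}^1(X)$; and Dirac measures on periodic orbits are ergodic, so ${\cal E}$ is dense. For the $G_\delta$ part I would use the standard characterization of ergodicity via Birkhoff averages: fixing a countable family $\{f_k\}$ of bounded Lipschitz functions that is dense (for the sup norm on compacts, or sufficient to separate measures) in the relevant sense, a measure $\mu$ is ergodic iff for every $k$ the time averages $A_T f_k(x) = \frac1T\int_0^T f_k(\phi_s x)\,ds$ converge $\mu$-a.e. to the constant $\int f_k\,d\mu$, equivalently iff $\int \big(\lim_{T} A_T f_k\big)^2 d\mu = \big(\int f_k\, d\mu\big)^2$ for all $k$. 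Writing the variance of the ergodic-average limit as a function of $\mu$ and checking that, for each $k$ and each $\varepsilon$, the set $\{\mu : \mathrm{Var}_\mu(\text{limit of }A_T f_k) < \varepsilon\}$ is open in the weak topology, one realizes ${\cal E}$ as a countable intersection of open sets. This is a classical fact (it goes back to Oxtoby, and is exactly the ingredient behind Sigmund's theorem in the compact case); I would cite it rather than reprove it, but one must be slightly careful that the non-compactness of $X$ does not spoil the upper semicontinuity arguments — using bounded functions throughout keeps all the integrals finite and the weak topology well behaved, so the classical proof carries over verbatim.

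Finally I would assemble the pieces: ${\cal M}^1(X)$ is polish (stated in the excerpt), ${\cal E}$ is a dense $G_\delta$, the full-support measures form a dense $G_\delta$, hence their intersection — the ergodic measures of full support — is a dense $G_\delta$ in ${\cal M}^1(X)$, which is the claim.

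The main obstacle I anticipate is the $G_\delta$ property of ${\cal E}$ in the non-compact setting: one has to make sure that restricting to \emph{bounded} test functions (as the excerpt emphasizes for the weak topology) is enough to detect non-ergodicity, i.e. that a non-ergodic $\mu$ genuinely fails the variance condition for \emph{some} bounded Lipschitz $f_k$ from a fixed countable family. This is true because an invariant set of intermediate measure can be approximated in $L^2(\mu)$ by bounded Lipschitz functions, but the uniformity over all $\mu$ simultaneously is the delicate point; choosing the countable family $\{f_k\}$ once and for all so that it is dense in $C_b(X)$ in a topology fine enough to separate all probability measures (which is possible since $X$ is polish) resolves it. Everything else is a routine application of Baire's theorem.
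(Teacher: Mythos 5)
Your overall architecture matches the paper's: show that the fully supported measures form a dense $G_\delta$ (via the preceding lemma once periodic orbits are known to be dense in $X$), show the ergodic measures are dense (via Lemma~\ref{density_periodic_orbits_in_erg_probas}, the Proposition and Corollary~\ref{density_convex_periodic_in_proba_measures}) and form a $G_\delta$, then intersect two dense $G_\delta$'s by Baire in the polish space ${\cal M}^1(X)$. The one place where you genuinely diverge from the paper is in proving that the ergodic measures form a $G_\delta$. You invoke the classical Birkhoff-average characterization: $\mu$ is ergodic iff the time averages $A_Tf_k$ of a fixed countable separating family collapse in $L^2(\mu)$ to constants, and you realize ${\cal E}$ as a countable intersection of open sublevel sets of the associated variance functionals. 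The paper instead argues entirely by convexity: it exhibits the complement of ${\cal E}$ as $\bigcup_{k,l,i} F_{k,l,i}$, where $F_{k,l,i}$ is the set of measures admitting a nontrivial barycentric decomposition $m=\alpha\mu+(1-\alpha)\nu$ with $\alpha$ bounded away from $\{0,1\}$ by $1/k$ and $\int f_i\,d\mu\geq\int f_i\,d\nu+1/l$, and then shows each $F_{k,l,i}$ closed by extracting sub-limits of $\mu_n,\nu_n$ from the tightness of the converging sequence $m_n$. Both routes need the same underlying input, namely a countable algebra of bounded Lipschitz functions that is $L^1$ (or $L^2$) dense with respect to every borel probability measure simultaneously (the Rohlin fact the paper cites to \cite{cou3}); you correctly identify this as the point where non-compactness could in principle bite, and the reason it does not. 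The paper's barycenter argument is arguably more elementary (no ergodic theorem, only convexity, weak convergence of bounded functions, and tightness), while yours connects directly to the Oxtoby--Parthasarathy--Sigmund tradition. One small imprecision to tidy in your sketch: $\lim_T A_Tf_k$ is only defined $\mu$-a.e., so the set $\{\mu : \mathrm{Var}_\mu(\lim_T A_Tf_k)<\varepsilon\}$ is not literally a sublevel set of a weakly continuous functional; the usual fix is to write ${\cal E}=\bigcap_{k,n}\bigcup_T\{\mu:\int|A_Tf_k-\int f_k\,d\mu|^2\,d\mu<1/n\}$ and use that each finite-time $A_Tf_k$ is bounded continuous, together with the lower bound $\int|A_Tf_k-\int f_k\,d\mu|^2\,d\mu\geq\|E_\mu[f_k|\mathcal{I}]-\int f_k\,d\mu\|_{L^2(\mu)}^2$ coming from conditional orthogonality. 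Since you explicitly defer to the classical reference rather than reprove it, this is a cosmetic rather than a substantive issue.
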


Theorem \ref{mainresult} is a particular case of this result. 

\begin{proof} As noted above, the periodic orbits are dense in $X$.
From the previous lemma, we know that the set of probability measures 
with full support is a  dense $G_{\delta}$ subset of 
the set of all invariant borel probability measures ${\cal M}^1(X)$.
The proposition shows that the set of ergodic invariant probability measures 
is also dense in ${\cal M}^1(X)$. It remains to check that 
this set is a $G_{\delta}$ set.

\medskip

Let $\{f_i\}_{i\in \bf N}$ be some countable algebra 
of Lipschitz bounded functions which sepa\-rate points on $X$. 
This set is dense in $L^1(m)$, for all borel probability measure $m$~; 
this fact goes back to Rohlin; we refer to \cite{cou3} for a short proof.
We can know write the complement of the set of ergodic probability measures as 
the union of the following closed sets: 
$$
\vbox{\halign{$#$ &\hfill $#$\cr
  F_{k,l,i}=
& \{m \in {\cal M}^1(X)\ |\ \exists\, \mu,\nu\in\ {\cal M}^1(X),
  \alpha\in \big[{1\over k},1-{1\over k}\big],\ s.t.\ 
  \ m=\alpha\mu+(1-\alpha)\nu\ \cr
& and \   \int f_id\mu\,\geq \int f_i d\nu+{1\over l}\ \}\cr
}}
$$
The condition $\int f_id\mu\,\geq \int f_i d\nu+{1\over l}$ insures that the 
two measures $\mu$ and $\nu$ are not equal. This is a closed condition.
Now let  $m_n$ be a sequence of measures that can be written as 
barycenters $\alpha_n\mu_n+(1-\alpha_n)\nu_n$. If the sequence $m_n$
is converging in ${\cal M}^1(X)$, then it is tight. 
Up to a subsequence, we can assume that $\mu_n$ and $\nu_n$ 
converge to probability measures $\mu$ and 
$\nu$ of mass $\le 1$, and 
$\alpha_n\to \alpha\in [\frac{1}{k},1-\frac{1}{k}]$. 
The relation $m=\alpha\mu+(1-\alpha)\nu$ 
implies that the limit $m$ of $m_n$ is
a barycenter, and $F_{k,l,i}$ is closed.
This ends the proof.
\end{proof}


\section{Non-positively curved manifolds}

We have just proven that a geodesic flow on a negatively curved 
manifold always admits an ergodic invariant measure fully supported
on the non-wandering set of the flow.
We now give generalisations of this result to non-positively curved 
manifolds.

\subsection{Closing lemma and local product}
\quad
Let $M$ be a riemannian manifold and $v$ a vector belonging to the 
unit tangent bundle $T^1M$ of $M$. The vector $v$ is a 
\emph{rank one vector},
if the only parallel Jacobi fields along the geodesic generated by $v$
are proportional to the generator of the geodesic flow.
A connected complete non-positively curved manifold is said to be a 
\emph{rank one manifold} 
if its tangent bundle admits a rank one vector. In that case, the set of 
rank one vectors  is an open subset of $T^1M$. 
Rank one vectors generating closed geodesics 
are precisely the 
hyperbolic periodic points of the geodesic flow.

\medskip

\quad
Negatively curved manifolds are rank one manifolds.
There are also examples of rank one manifolds 
for which the sectional curvatures vanish on an open subset
of the manifold.
The concept of rank one manifold was introduced by 
W. Ballmann, M. Brin, P. Eberlein \cite{bbe};
we refer to the survey of G. Knieper \cite{kni} for an overview 
of the properties of rank one manifolds.

\bigskip

\quad
The goal is now to find a big subset of the non wandering set $\Omega$,
so that the geodesic flow $\phi_t$ 
has a local product structure and satisfies the closing lemma in restriction
to that subset.
Let us consider the set $\Omega_1\subset T^1M$ 
of non-wandering rank one vectors 
satisfying the following condition:

\smallskip

{\em
If $w\in T^1M$ is such that the distance $d(\phi_t(w),\phi_t(v))$
stays bounded for $t\geq 0$, then there is a $t_0\in {\bf R}$
such that  $\phi_{t_0}(w)\in W^{ss}(v)$.
}
\smallskip

In other words, any non-wandering 
half-geodesic staying at a bounded distance 
from a geodesic in $\Omega_1$ is in fact asymptotic to that geodesic.
Typical examples of vectors which are not in $\Omega_1$ are 
vectors whose associated geodesic is asymptotic to a closed geodesic 
belonging to a flat half-cylinder.

\begin{prop} 
Let $M$ be a rank one manifold.
Then the restriction of the geodesic flow to $\Omega_1$ 
admits a local product structure and satisfies the 
closing lemma. 
\end{prop}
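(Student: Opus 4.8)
The plan is to establish the two properties separately, relying in both cases on the geometry of rank one vectors and on the defining condition of $\Omega_1$. Throughout I will work in the universal cover $\widetilde M$, where the geodesic flow has a \emph{global} product structure: two boundary points $\xi\neq\eta$ in $\partial\widetilde M$ that are joined by a geodesic, together with a time parameter, determine a unique vector. The link between $T^1\widetilde M$ and $T^1M$ is the quotient by the deck group $\Gamma$, so all local statements downstairs come from equivariant statements upstairs, provided one controls the relevant neighborhoods.

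First I would treat the local product structure. Fix $v\in\Omega_1$ and a lift $\tilde v\in T^1\widetilde M$. Since $v$ is a rank one vector, the geodesic $\gamma_{\tilde v}$ has distinct, ``contracting'' endpoints $\gamma_{\tilde v}(\pm\infty)\in\partial\widetilde M$, and there is a neighborhood of the pair $(\gamma_{\tilde v}(-\infty),\gamma_{\tilde v}(+\infty))$ in $\partial\widetilde M\times\partial\widetilde M$ on which the map ``pair of endpoints + time $\mapsto$ vector'' is a well-defined homeomorphism onto a neighborhood $\widetilde V$ of $\tilde v$ in $T^1\widetilde M$ (this is exactly where rank one is used: near a rank one vector the Hopf parametrization is a local chart, even though it may degenerate elsewhere). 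Pushing down, I get a neighborhood $V$ of $v$ in $T^1M$. Given $x,y\in V\cap\Omega_1$ with $d(x,y)$ small, I lift to nearby $\tilde x,\tilde y\in\widetilde V$; the point $\langle\tilde x,\tilde y\rangle$ is the vector with backward endpoint $\gamma_{\tilde x}(-\infty)$ and forward endpoint $\gamma_{\tilde y}(+\infty)$, suitably time-shifted, and it projects to the desired $\langle x,y\rangle\in T^1M$. That $\langle x,y\rangle\in W^{su}_\varepsilon(\phi_t x)\cap W^{ss}_\varepsilon(y)$ for $|t|\le\varepsilon$ follows from the uniform contraction/expansion estimates valid in the rank one part of $T^1\widetilde M$: two geodesics sharing a forward endpoint and starting close stay close and converge in forward time (and symmetrically for the backward endpoint). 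One must check that the resulting vector is again non-wandering and rank one so that it genuinely lands in the domain where these estimates hold — but $\langle x,y\rangle$ lies on the stable set of $y\in\Omega_1$, hence its forward geodesic is asymptotic to a non-wandering rank one geodesic, and a standard argument (rank is non-increasing along forward-asymptotic geodesics, and non-wandering is inherited by the $\omega$-limit) places it in $\Omega$ with rank one; the $\Omega_1$ condition is not actually needed for this half.

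Next, the closing lemma. Here the $\Omega_1$ condition does the real work. Fix $v\in\Omega_1$ with lift $\tilde v$, and let $V$ be as above. Suppose $x\in V$, $t>t_0$, $\phi_t x\in V$ and $d(x,\phi_t x)<\delta$; lift so that $\tilde x\in\widetilde V$ and the deck transformation $g\in\Gamma$ carrying the relevant lift of $\phi_t x$ back satisfies $d(\tilde x, g\tilde x)<\delta$ and $g\tilde x\in\widetilde V$. Then $g$ is an axial isometry whose axis has both endpoints close to those of $\gamma_{\tilde v}$, hence inside the product chart; this axis projects to a closed geodesic of length $l$ with $|l-t|<\varepsilon$, and the orbit-closeness estimate $d(\phi_s x_0,\phi_s x)<\varepsilon$ for $0<s<\min(t,l)$ comes again from the contraction estimates comparing $\gamma_{\tilde x}$ with the axis of $g$, which share nearly the same endpoints. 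The subtle point is \emph{why} $g$ is axial and not parabolic or mixed, and why the displacement function is actually minimized near $\tilde x$: this is precisely where one uses that $v$ is rank one and that the geodesic of $v$ does not accumulate on a flat strip — i.e., the $\Omega_1$ hypothesis, which guarantees that nothing at bounded distance from $\gamma_{\tilde v}$ behaves like a parabolic direction. Concretely, the condition forces a uniform ``no almost-flat strips near $\gamma_{\tilde v}$'' property over the compact piece of geodesic under consideration, which makes the closing argument of Eberlein (recalled in the appendix) go through verbatim on a $\phi_t$-neighborhood of $v$.

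\textbf{Main obstacle.} The principal difficulty is not the existence of $\langle x,y\rangle$ or of the closing orbit — those are formal once one has a product chart — but rather the \emph{uniformity} of the contraction estimates and of the ``no flat strip'' property on a full neighborhood $V$ of $v$, as opposed to at the single vector $v$. Rank one is an open but not uniformly open condition: the Jacobi field estimates can degenerate as one approaches the boundary of the rank one set. The role of the defining condition of $\Omega_1$ is exactly to rule out the degenerate limiting behavior (asymptotic flat half-cylinders), and the crux of the proof is to convert that pointwise, asymptotic condition into the uniform-over-$V$, finite-time estimates needed to verify Definitions \ref{closing_lemma} and \ref{local_product_structure}. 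I would expect this conversion — a compactness-plus-continuity argument using that the relevant pieces of geodesic are bounded in length — to be the technical heart of the proof.
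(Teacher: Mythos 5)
Your proposal inverts the role of the defining condition of $\Omega_1$, and this is where the argument breaks.

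For the local product structure you claim that the $\Omega_1$ condition ``is not actually needed for this half,'' and you justify the membership $\langle x,y\rangle\in W^{ss}_\varepsilon(y)$ by asserting that ``two geodesics sharing a forward endpoint and starting close stay close and converge in forward time.'' This is false in non-positive curvature: two geodesics with the same forward endpoint at infinity may remain at a fixed positive distance forever (flat strips, flat half-cylinders). Rank one of $v$ rules out flat strips \emph{along $\gamma_v$ itself}, but it says nothing about a nearby geodesic that is asymptotic to a higher-rank geodesic --- and the paper's flat half-cylinder example is exactly a rank one non-wandering vector for which a bounded geodesic fails to converge. The set $\Omega_1$ is \emph{defined} to be the set where ``bounded forward distance'' can be upgraded to ``strongly asymptotic,'' and the paper's proof uses it precisely here: it invokes Knieper's lemma~1.3 to produce a point $w'$ whose geodesic is merely \emph{bounded} forward from $\gamma_v$ and bounded backward from $\gamma_w$, and then applies the $\Omega_1$ hypothesis on $v$ and on $w$ to promote ``bounded'' to ``asymptotic,'' landing $w'$ in $W^{ss}(\phi_{t_0}v)\cap W^{su}(\phi_{t_1}w)$. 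Without $\Omega_1$ this last step has no justification.

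Conversely, you make $\Omega_1$ carry the weight in the closing lemma (``this is precisely where one uses \ldots the $\Omega_1$ hypothesis''), but Eberlein's closing lemma (prop.~4.5.15 of the cited reference, reproved in the appendix via the Ballmann lemma and a Brouwer fixed-point argument) is valid for \emph{all} non-wandering rank one vectors, with no $\Omega_1$ assumption; the fact that $g$ is axial comes from endpoint control and the Brouwer argument, not from excluding parabolic directions by $\Omega_1$. The only place $\Omega_1$ enters the closing-lemma half is to check that the periodic orbit produced stays inside the domain of restriction: it is close to a rank one vector, rank one is open so the closed orbit is rank one, and recurrent rank one vectors lie in $\Omega_1$ by Knieper's prop.~4.4. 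The paper's proof is thus a short reduction to three quoted facts (Eberlein's closing lemma, Knieper 4.4, Knieper 1.3), whereas your outline tries to rebuild the contraction estimates from the Hopf chart; the rebuild misses the genuine obstruction (flat strips near the boundary of the rank one set) and so mislocates where $\Omega_1$ is indispensable.
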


\begin{proof}
The geodesic flow, in restriction to the set of non-wandering rank one 
vectors, satisfies the closing lemma.
This is proven by P. Eberlein in \cite{eb1}, prop 4.5.15,
in the general setting of non-positive curvature.
We recall the proof in an appendix at the end of the paper,
since the rank one hypothesis allows for some simplification.

We also know that  recurrent rank one vectors belong 
to $\Omega_1$ \cite{kni}, prop. 4.4.
Hence, rank one periodic orbits are in $\Omega_1$.
Now, if we apply the closing lemma to a non-wandering rank one 
vector, the closed orbit obtained is close to the rank one 
vector. Hence, as the set of rank one vectors is open, 
it is a closed rank one orbit and belongs
to $\Omega_1$. So the closing lemma is satisfied in restriction
to $\Omega_1$.

\quad
Let us consider two vectors $v,w$ in $T^1M$ which are close to each other.
Then there is a point $w'$ close to $v$ and $w$, such that 
$d(\phi_t(w),\phi_t(w'))$ is bounded for negative times, 
and $d(\phi_t(v),\phi_t(w'))$ is bounded for positive times
\cite{kni}, lemma 1.3.
So, if $v$ and $w$ belong to $\Omega_1$, then $w'$ is a rank one 
vector whose associated geodesic is positively asymptotic
trajectory of $v$, and negatively asymptotic to the trajectory
of $w$. This shows that $w'$ is in $\Omega_1$ and that there is 
a local product structure in restriction to $\Omega_1$. 
\end{proof}

\begin{prop} 
Let $M$ be a rank one manifold.
Then the restriction of the geodesic flow to $\Omega_1$ 
is transitive.
\end{prop}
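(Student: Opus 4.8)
The plan is to reduce transitivity of $\phi_t$ on $\Omega_1$ to the density and ``chain-transitivity'' of rank one periodic orbits, using the local product structure already established in the previous proposition. Recall that a continuous flow on a space is transitive as soon as for every pair of non-empty open sets $U,V$ (in the subspace topology of $\Omega_1$) there is a point of $U$ whose forward orbit meets $V$; equivalently, it suffices to produce a dense orbit, and on a separable metric space the standard Baire-category argument shows that the forward-topologically-transitive property for all pairs from a countable basis yields a dense $G_\delta$ of points with dense orbit. So I would fix two non-empty open subsets $U, V$ of $\Omega_1$ and aim to connect them.

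First I would use the fact, quoted from Knieper, that recurrent rank one vectors lie in $\Omega_1$ and that rank one periodic orbits are dense in $\Omega_1$: more precisely, the closure of the rank one periodic orbits contains $\Omega_1$ (this follows from the closing lemma applied to non-wandering rank one vectors, exactly as in the proof of the previous proposition). Hence I may pick rank one periodic points $p \in U$ and $q \in V$. The heart of the matter is then to join the periodic orbit of $p$ to the periodic orbit of $q$ by an orbit segment that stays inside $\Omega_1$. Here I would invoke the local product structure on $\Omega_1$ together with the geometry of rank one manifolds: given two rank one vectors, Knieper's Lemma 1.3 (already used above) produces an intermediate vector $w'$, positively asymptotic to one geodesic and negatively asymptotic to the other, and this $w'$ lies in $\Omega_1$. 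Applying this with the forward orbit of $p$ and the backward orbit of $q$ — possibly after first using recurrence of $p$ and $q$ to return close to themselves, and after a preliminary perturbation so that the relevant vectors are genuinely close — yields a single orbit in $\Omega_1$ which is forward asymptotic to the orbit of $q$ and backward asymptotic to the orbit of $p$; in particular its forward orbit enters $V$ and it itself can be taken in a prescribed neighborhood of $p$, hence in $U$. This shows every point of a basis element $U$ can be perturbed inside $\Omega_1$ to a point whose forward orbit meets $V$, which is exactly forward topological transitivity.

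Alternatively, and perhaps more cleanly, I would phrase the connecting step as a ``shadowing/gluing'' statement parallel to the one in the proof of the Proposition in Section~3: using the local product structure on $\Omega_1$ recursively, a forward orbit of $p$ that recurs near $p$ can be glued to a forward orbit of $q$ that recurs near $q$, producing an orbit in $\Omega_1$ following $p$ for a long time and then $q$ for a long time. Since $\Omega_1$ is a separable metric space, running this for all pairs $U_i, U_j$ from a countable basis and intersecting the resulting dense $G_\delta$ sets of ``good'' starting points gives a genuine dense orbit, hence transitivity.

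The main obstacle, I expect, is making the gluing rigorously stay \emph{inside} $\Omega_1$ rather than merely inside $T^1M$: $\Omega_1$ is defined by a non-closed asymptoticity condition, so an orbit obtained as a limit of glued pieces, or via the closing lemma, need not obviously satisfy it. The escape route — and the reason the proposition is true — is that $\Omega_1$ contains a neighborhood of each rank one periodic orbit (rank one vectors form an open set, and a vector close to a rank one periodic orbit generates a closed rank one geodesic, which is recurrent, hence in $\Omega_1$ by Knieper's Prop.~4.4), and that the intermediate vector $w'$ from Knieper's Lemma~1.3, being positively and negatively asymptotic to orbits in $\Omega_1$, is itself verified to be in $\Omega_1$ directly from the definition. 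So the careful point is to arrange all the connecting orbits to be of one of these two controllable types. Once that is in place, the Baire-category packaging of forward transitivity into the existence of a dense orbit is routine.
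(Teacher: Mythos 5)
Your overall skeleton matches the paper's: pick rank one periodic vectors $v_1\in U_1$, $v_2\in U_2$ (using density of rank one periodic orbits in $\Omega_1$, which you correctly note follows from the closing lemma), then produce a rank one connecting vector $v_3$ whose orbit is negatively asymptotic to $v_1$'s orbit and positively asymptotic to $v_2$'s, and argue $v_3\in\Omega_1$. Your Baire-category packaging at the end is harmless but unnecessary: topological transitivity is exactly the ``orbit from $U_1$ to $U_2$'' property, which is what the paper verifies directly.

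There are two genuine gaps, however. First, you attribute the existence of $v_3$ to Knieper's Lemma~1.3, which is the \emph{local} product structure: it glues two rank one vectors that are \emph{already close}. The periodic vectors $v_1$ and $v_2$ need not be close, and no ``recurrence plus preliminary perturbation'' fixes that, since each periodic orbit is bounded and need never approach the other. What is needed, and what the paper cites, is the \emph{global} connecting statement (Knieper, Lemma~1.4~ff.), which for any two rank one vectors produces a rank one geodesic joining a negative endpoint of one to a positive endpoint of the other.

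Second, and more importantly, you assert that $v_3$ lies in $\Omega_1$ ``directly from the definition'' because it is asymptotic to orbits in $\Omega_1$. But $\Omega_1$ is defined as a subset of \emph{non-wandering} rank one vectors, and being bi-asymptotic to periodic orbits does not by itself make a vector non-wandering. This is precisely the point the paper spends most of its proof on: it first produces the \emph{reverse} connecting orbit (negatively asymptotic to $v_2$, positively asymptotic to $v_1$), noting that $v_1$ and $v_2$ are heteroclinically related, and then uses the local product structure to glue the two connecting orbits into a trajectory that starts near $v_3$ and returns near $v_3$, which establishes $v_3\in\Omega$. Only then does the asymptoticity to $v_2\in\Omega_1$ give $v_3\in\Omega_1$. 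Without that heteroclinic gluing step, the conclusion $v_3\in\Omega_1$ is unjustified.
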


\begin{proof}
Let $U_1$ and $U_2$ be two open sets containing points in $\Omega_1$.
We show that there is a trajectory in $\Omega_1$ that starts from
$U_1$ and ends in $U_2$. This will prove transitivity on $\Omega_1$.
From the previous proposition, we have seen that rank one 
periodic vectors are dense in $\Omega_1$.
So there exists rank one periodic vectors $v_1\in U_1$
and $v_2\in U_2$.
 There is also a rank one vector $v_3$
whose trajectory is negatively asymptotic to the trajectory of 
$v_1$ and positively asymptotic to
the trajectory of $v_2$, cf \cite{kni} lemma 1.4 ff.\\ 
Let us show that $v_3$ is non-wandering.
First note that there is also a trajectory  negatively asymptotic to 
$v_2$ and positively asymptotic to
the trajectory of $v_1$. That is, the two periodic orbits $v_1$, $v_2$
are heteroclinically related.




\begin{center}
\epsfig{figure=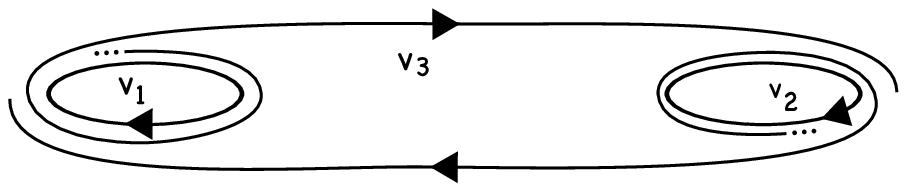,width=0.4\textwidth,angle=0}
\end{center}

This implies that 
the two connecting orbits are non-wandering : 
indeed, using the local product structure, we can glue
the two connecting orbits to obtain a trajectory that 
starts close to $v_3$, follows the second connecting orbit,
and then follows the orbit of $v_3$, coming back to the vector 
$v_3$ itself. Hence $v_3$ is in $\Omega$. Since it is 
asymptotic to $v_2$, it belongs to $\Omega_1$ and we are done.
\end{proof}

\medskip

Theorem \ref{generalresult} applied in restriction to $\Omega_1$ gives the: 

\begin{corol}
Let $M$ be a non-positively curved connected complete riemannian manifold.
Then there exists an ergodic finite borel measure on  $T^1M$, 
invariant with respect to the geodesic flow, whose support 
contains all hyperbolic periodic orbits.
\end{corol}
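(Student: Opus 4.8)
The plan is to reduce the corollary to Theorem \ref{generalresult} applied to the subsystem $\Omega_1$, so the only real work is to check that the hypotheses of that theorem are met and that the resulting measure is a finite measure on all of $T^1M$ whose support contains every hyperbolic periodic orbit. First I would observe that if $M$ admits no rank one vector at all, i.e. $M$ has higher rank everywhere, then there are no hyperbolic periodic geodesics and the statement is vacuous — any ergodic finite invariant measure (for instance the one produced by the compact flat factor, or simply by recurrence arguments) works, since the condition ``support contains all hyperbolic periodic orbits'' is empty. So I may assume $M$ is a rank one manifold, and then the two previous propositions apply: the restriction of the geodesic flow to $\Omega_1$ admits a local product structure, satisfies the closing lemma, and is transitive.

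Next I would address the topological hypothesis of Theorem \ref{generalresult}, namely that the phase space be polish. The set $\Omega_1\subset T^1M$ need not be open, but $T^1M$ is a locally compact separable metric space, hence polish; and $\Omega_1$ is a Borel (indeed $G_\delta$ inside $\Omega$, by the boundedness-implies-asymptotic condition) subset of $\Omega$, which is closed in $T^1M$. One checks that $\Omega_1$, with the induced topology, is again a polish space — a $G_\delta$ subset of a polish space is polish. With that in hand, Theorem \ref{generalresult} applies verbatim to $(\Omega_1,\phi_t)$ and yields a dense $G_\delta$ set of ergodic $\phi_t$-invariant Borel probability measures on $\Omega_1$ of full support in $\Omega_1$; pick any one of them, call it $m$.

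Finally I would promote $m$ to a measure on $T^1M$ by pushing forward under the inclusion $\Omega_1\hookrightarrow T^1M$ (equivalently, extending by zero outside $\Omega_1$). This is a finite (in fact probability) ergodic $\phi_t$-invariant Borel measure on $T^1M$. Its support, computed in $T^1M$, is the closure in $T^1M$ of $\Omega_1$. Now every hyperbolic periodic orbit of the geodesic flow is, by the discussion preceding the first proposition, a rank one periodic orbit, and we recalled in the proof of that proposition that rank one periodic orbits lie in $\Omega_1$; since $m$ has full support in $\Omega_1$, every such orbit is contained in $\mathrm{supp}(m)$. This gives the conclusion.

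The main obstacle I anticipate is the verification that $\Omega_1$ is polish — one must be careful that the defining condition ``any non-wandering half-geodesic staying a bounded distance from $v$ is asymptotic to $v$'' really cuts out a $G_\delta$ (or at least absolutely Borel) subset, so that the induced topology is completely metrizable; everything else is a routine application of the machinery already assembled, plus the trivial dichotomy on whether rank one vectors exist.
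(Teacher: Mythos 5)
Your proposal follows exactly the paper's route: the corollary is obtained by applying Theorem~\ref{generalresult} to the geodesic flow restricted to $\Omega_1$, invoking the two preceding propositions for the closing lemma, local product structure and transitivity on $\Omega_1$, and then viewing the resulting probability measure on $\Omega_1$ as a measure on $T^1M$ whose support (the closure of $\Omega_1$) contains every rank one, i.e.\ every hyperbolic, periodic orbit. The paper dispatches this in one sentence; you have merely expanded the bookkeeping.

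Two of your additions deserve comment. First, your treatment of the case where $M$ admits no rank one vector does not hold up: you claim that some ergodic finite invariant measure exists ``by the compact flat factor, or by recurrence arguments,'' but neither is available in general --- for $M=\mathbb{R}^n$ the geodesic flow on $T^1M$ has no nonzero finite invariant measure at all, so the sentence ``there exists an ergodic finite borel measure'' fails. The corollary should really be read with the tacit hypothesis that a hyperbolic closed geodesic exists (equivalently $\Omega_1\neq\emptyset$, so that $\mathcal M^1(\Omega_1)\neq\emptyset$), a point that neither you nor the paper makes explicit, but your proposed fix for the degenerate case is incorrect rather than merely silent. Second, you rightly flag that $\Omega_1$ must be polish for Lemma~4.1 and Theorem~\ref{generalresult} to apply, and you are right that the paper does not address this; but be careful with ``or at least absolutely Borel'': a subspace of a polish space is completely metrizable in the induced topology if and only if it is $G_\delta$, so a merely Borel $\Omega_1$ would not do. Your parenthetical assertion that $\Omega_1$ is $G_\delta$ in $\Omega$ is exactly what needs a proof (the defining property of $\Omega_1$ involves a universal quantifier over all of $T^1M$ and is not obviously Borel, let alone $G_\delta$); you identify the obstacle correctly but do not remove it.
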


\subsection{Rank one manifolds}

 There are examples of rank one manifolds for which 
$\Omega_1$ is not dense in the non-wandering set $\Omega$.

\medskip



\begin{center}
\epsfig{figure=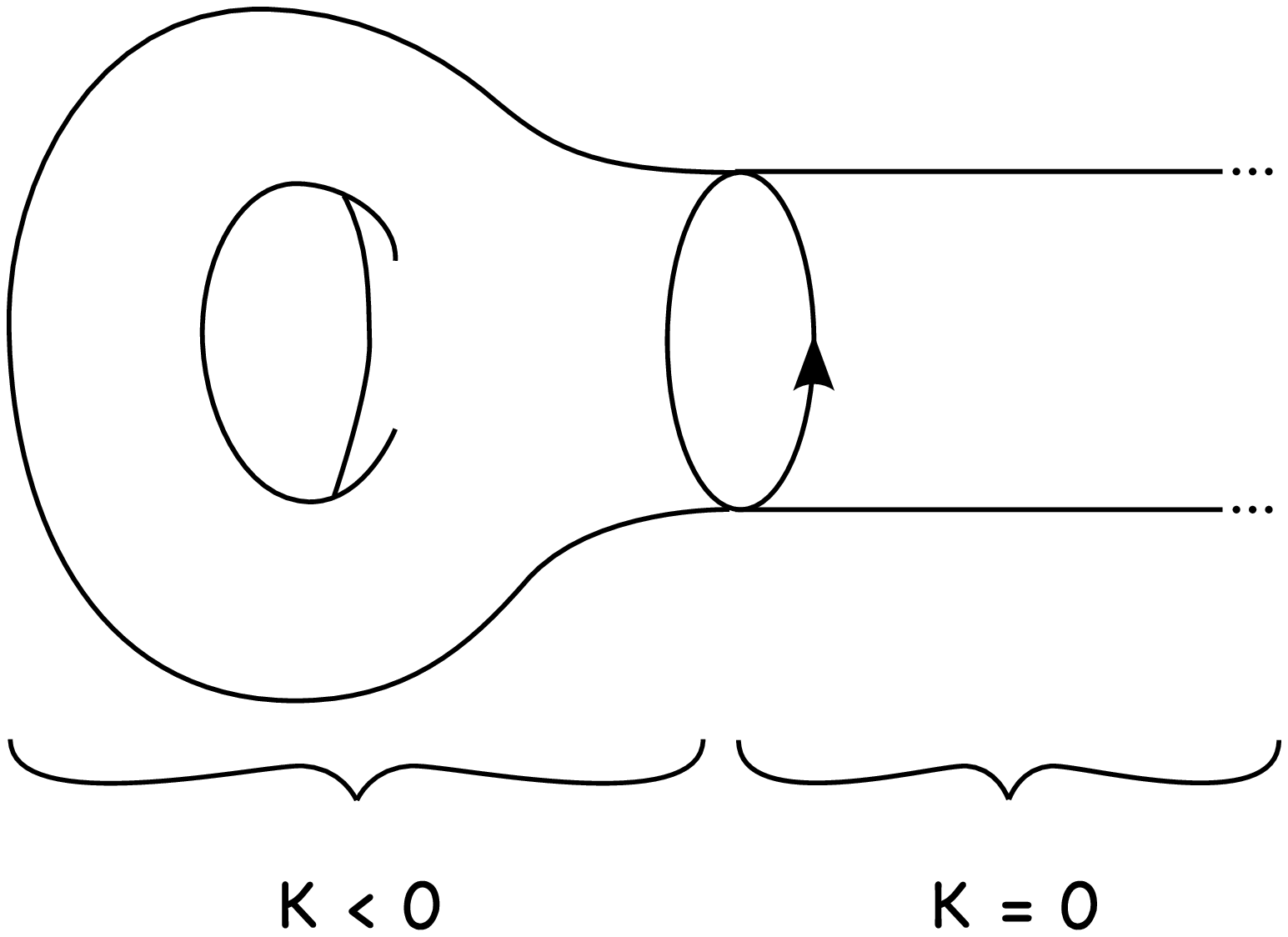,width=0.4\textwidth,angle=0}
\end{center}

\medskip

For example, let us consider a non-positively curved surface,
with a geodesic $\gamma$ bounding an half euclidean cylinder.
This corresponds to the right part on the picture above.
The remainder of the surface is compact, negatively curved.
This is depicted on the left hand side of the picture.
There are three types of geodesics on the surface :

\medskip

-- Closed geodesics in the cylinder part, which are parallel to $\gamma$.
   
\smallskip

-- Geodesics which cross $\gamma$. They are associated
   to rank one wandering vectors.

\smallskip

-- Geodesics that stay in the negatively curved part of the surface.
   They are associated to rank one 
   vectors.

\medskip

The non-wandering set $\Omega$ extends arbitrarily far on the right
whereas the set $\Omega_1$ is contained in the left part of the picture.
The only ergodic invariant probability measures giving positive measure to
the cylinder part are the Dirac measures on the periodic orbits.
This gives a rank one example with no ergodic invariant finite measure 
supported by $\Omega$.

\bigskip

\quad
There is however a simple condition that insures both the density of 
$\Omega_1$ in $\Omega$ and the transitivity on $\Omega_1$:
if $\Omega=T^1M$, then the rank one vectors form an open dense 
subset of $T^1M$; moreover we have seen that
 the geodesic flow is transitive;
this was first proven by Eberlein, see e.g.
 \cite{eb1}, prop. 4.7.3 and 4.7.4. 
This implies the existence of a rank one vector with dense orbit.
As mentioned earlier, such a vector belongs to $\Omega_1$; we refer to 
G. Knieper, \cite{kni} prop 4.4. 
We have established:

\begin{theo} Let $M$ be a rank one
manifold. We assume that 
the non-wandering set of the geodesic flow coincides with $T^1M$. 
Then there exists an ergodic borel probability 
measure on $T^1M$, which is invariant by the geodesic flow, and 
of full support.
\end{theo}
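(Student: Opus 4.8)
The plan is to deduce this theorem as a direct application of Theorem~\ref{generalresult}, after verifying its four hypotheses for the geodesic flow restricted to the set $\Omega_1$, and then transferring the resulting measure to all of $T^1M$. First I would observe that under the standing assumption $\Omega = T^1M$, the classical results of Eberlein (\cite{eb1}, prop.~4.7.3 and 4.7.4) give that the rank one vectors form an open dense subset of $T^1M$ and that the geodesic flow is topologically transitive; hence there is a rank one vector with dense orbit, and by Knieper (\cite{kni}, prop.~4.4) such a recurrent rank one vector lies in $\Omega_1$. This simultaneously shows that $\Omega_1$ is a non-empty, transitive-flow-carrying set whose closure is all of $T^1M$.

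Next I would assemble the hypotheses of Theorem~\ref{generalresult} for $\phi_t$ restricted to $\Omega_1$. The space $\Omega_1$ is a subset of the polish space $T^1M$; one must check it is itself polish for the induced topology, which follows if $\Omega_1$ is a $G_\delta$ in $T^1M$ (it is an intersection of the open set of rank one vectors with the non-wandering set and with the asymptoticity condition, the last of which can be written as a countable intersection of open conditions). The local product structure and the closing lemma in restriction to $\Omega_1$ are exactly the content of the first Proposition of Section~5.1, and transitivity of $\phi_t|_{\Omega_1}$ is the second Proposition of that subsection (whose hypothesis, rank one manifold, holds here). Theorem~\ref{generalresult} then yields an invariant ergodic borel probability measure $\mu$ on $\Omega_1$ whose support is all of $\overline{\Omega_1}$.

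Finally I would identify $\mathrm{supp}\,\mu$ with $T^1M$: since $\Omega_1$ is dense in $T^1M$ (established in the first paragraph) and $\mu$ has full support in $\overline{\Omega_1}$, we get $\mathrm{supp}\,\mu = \overline{\Omega_1} = T^1M$, and $\mu$, viewed as a borel probability measure on $T^1M$, is invariant, ergodic, and of full support. The main obstacle is purely technical rather than conceptual: one must be careful that $\Omega_1$ is genuinely a polish space so that the Baire-category machinery of Section~4 applies, and that the notion of ``full support'' relative to $\Omega_1$ upgrades correctly to full support in $T^1M$ once the density $\overline{\Omega_1}=T^1M$ is in hand. Both points are dispatched by the density of rank one vectors with dense orbit, so no new dynamical input beyond what is cited is needed; the theorem then follows, and I would record it as ``established'' exactly as the excerpt does.
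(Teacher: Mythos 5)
Your proposal follows exactly the route of the paper: use Eberlein's results to get an open dense set of rank one vectors and topological transitivity under the hypothesis $\Omega = T^1M$, produce a rank one vector with dense orbit which by Knieper lies in $\Omega_1$, conclude $\Omega_1$ is dense in $T^1M$, and then apply the two Propositions of Section~5.1 together with Theorem~\ref{generalresult} (restricted to $\Omega_1$) to obtain an ergodic invariant probability measure whose support is $\overline{\Omega_1}=T^1M$. The one thing you add that the paper glosses over is the explicit check that $\Omega_1$ is itself a polish space; your suggested route (showing $\Omega_1$ is a $G_\delta$ in $T^1M$, in particular that the asymptoticity condition is $G_\delta$) is plausible but would need to be carried out carefully, and the paper does not address this either, so this is a point of added care rather than a divergence in method.
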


\section{Additional examples}

Theorem \ref{generalresult} can be applied to other examples:

\medskip

$\bullet$ \emph{Irreducible Markov chains defined on a countable alphabet.}

\smallskip

   We recover the density of Dirac measures on periodic points,
   in the set of all probability invariant measures, a result first 
   proven by Oxtoby \cite{Oxtoby} in the case of a finite alphabet.

\medskip
  
$\bullet$ \emph{Skew-products over Markov chains.} 

\smallskip

   The fiber has to be discrete, so that both the 
   product structure and the closing lemma lift to the    skew-product. The transitivity 
   on the skew-product was studied, for example, in \cite{cou}.
   When the fiber is a non-amenable group,
   the lift of a probability measure on the basis cannot be ergodic;
   this result is due to Zimmer \cite{zi}.   
   So the ergodic measures on the skew-product must be singular
   with respect to the action of the group extension.

\medskip

$\bullet$ \emph{Lift of hyperbolic systems to coverings of the base space.}

\smallskip

   For these systems, lifts of Gibbs measures are known to
   be ergodic only if the deck tranformation group is virtually
   $\bf Z$ or ${\bf Z}^2$ \cite{guivarch}.
   From this viewpoint, even the 
   existence of an infinite ergodic measure of full support 
   would have appeared somehow surprising. 
   Since these systems have received much attention 
   recently, we rephrase Theorem \ref{generalresult} in that context.

\begin{theo}
Let $X$ be a compact manifold, $\hat{X}$ a covering of $X$ and 
$\hat{\phi}_t$ a transitive flow on $\hat{X}$ which is the lift of an
Anosov flow defined on $X$. Then the set of invariant ergodic borel
probability measures with full support in $\hat{X}$  is a dense 
$G_\delta$-set in the set of all invariant borel probability measures.
\end{theo}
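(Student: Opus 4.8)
The plan is to obtain the statement as a direct application of Theorem~\ref{generalresult}: it is enough to check that $\hat\phi_t$ is a transitive continuous flow on a \emph{polish} space which admits a local product structure and satisfies the closing lemma. Transitivity and continuity are built into the hypotheses. For the polish property, I would fix an auxiliary Riemannian metric $g$ on the compact manifold $X$ and pull it back along the covering map $\pi\colon\hat X\to X$; since $X$ is compact, hence complete, the covering $(\hat X,\pi^*g)$ is a complete, separable, locally compact metric space, so it is polish and the Baire-category machinery of the previous sections is available on it. It then remains to transfer the two structural hypotheses from the Anosov flow on $X$ to its lift.

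With the lifted metric, $\pi$ is a local isometry, so $d_X(\pi\hat a,\pi\hat b)\le d_{\hat X}(\hat a,\hat b)$ always, and --- because $X$ is compact, its injectivity radius $\iota>0$ and every non-contractible loop in $X$ has length at least $2\iota$ --- one has $d_X(\pi\hat a,\pi\hat b)=d_{\hat X}(\hat a,\hat b)$ whenever $d_{\hat X}(\hat a,\hat b)<\iota$, while $d_{\hat X}(\hat z,g\hat z)\ge 2\iota$ for every $\hat z\in\hat X$ and every non-trivial deck transformation $g$. The Anosov flow on $X$ classically has a local product structure and satisfies the closing lemma, and for small $\varepsilon$ the local strong stable and strong unstable sets upstairs are precisely the $\pi$-lifts of those downstairs. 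Given $\hat v\in\hat X$ I would choose a neighbourhood $\hat V$ so small that $\pi|_{\hat V}$ is an isometry onto a subset of a neighbourhood $V$ of $\pi(\hat v)$ witnessing both properties downstairs; for nearby $\hat x,\hat y\in\hat V$ one pushes down to $x,y\in V$, forms the bracket $\langle x,y\rangle$ of Definition~\ref{local_product_structure} and lifts it back near $\hat v$, the required inclusions transferring because $\pi$ preserves distances below the scale $\iota$. The closing lemma is treated the same way: a near-return $\hat\phi_t\hat x\approx\hat x$ inside $\hat V$ projects to a near-return $\phi_t x\approx x$ in $V$, the downstairs closing lemma of Definition~\ref{closing_lemma} yields a periodic point $x_0$ shadowing the orbit of $x$, and lifting $x_0$ to the unique $\hat x_0$ within $\varepsilon$ of $\hat x$ produces, by a connectedness argument, an orbit of $\hat x_0$ that $\varepsilon$-shadows that of $\hat x$ on $[0,\min(t,l)]$.

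The one non-formal point --- and the heart of the theorem --- is that this $\hat x_0$ is genuinely periodic, not merely periodic modulo the deck group. Writing $\hat\phi_l\hat x_0=g\cdot\hat x_0$ for a deck transformation $g$, I would note that $\hat\phi_l\hat x_0$ and $\hat x_0$ are both within $O(\varepsilon)$ of $\hat x$ (using $|t-l|<\varepsilon$, the shadowing property on $[0,\min(t,l)]$, $d(\hat\phi_t\hat x,\hat x)<\delta$, and a bound on the flow speed over a time interval of length $\varepsilon$), so $d_{\hat X}(\hat x_0,g\hat x_0)=O(\varepsilon)$; choosing $\varepsilon$ small enough relative to $\iota$ and to the flow speed forces this distance to be smaller than $2\iota$, hence $g=e$ and $\hat\phi_l\hat x_0=\hat x_0$. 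This is exactly where the discreteness and proper discontinuity of the deck action enter, the same mechanism that makes the skew-product examples work when the fibre is discrete. Once the local product structure and the closing lemma are established on the polish space $\hat X$, Theorem~\ref{generalresult} applies verbatim and yields the dense $G_\delta$ of ergodic, fully supported invariant measures. I expect the uniform-displacement estimate of this last paragraph to be the main obstacle: it is precisely the step that would break down over a non-compact base, and compactness of $X$ is used only through the positive lower bound $\iota$ it supplies.
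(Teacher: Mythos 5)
Your proposal is correct and, like the paper (which states this theorem as a direct ``rephrasing'' of Theorem~\ref{generalresult} without giving details), reduces it to transferring the polish property, local product structure, and closing lemma from the compact base to the cover and checking that the Anosov closing orbit lifts to a \emph{genuine} periodic orbit upstairs via the uniform positive lower bound on deck-group displacement supplied by compactness of $X$; this is exactly the right way to fill the gap the paper leaves implicit. The only minor imprecision is quantitative: the correct uniform lower bound on $d_{\hat X}(\hat z,g\hat z)$ over nontrivial deck transformations $g$ is the minimal length of a loop in $X$ whose class is nontrivial in the deck group, which need not equal $2\iota$, but its positivity follows from compactness of $X$ just as you use it, so the argument is unaffected.
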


An analogous result holds for Axiom A systems, in restriction 
to preimages of basic pieces. 

\medskip

\quad
Finally, we remark that our results are again true 
if we look at transformations instead of flows. The proofs apply verbatim.


\section{Appendix : the rank one closing lemma}

We give a proof of the closing lemma for 
the geodesic flow on a non-positive complete manifold,
in restriction to the rank one non-wandering vectors.
There are two motivations for providing a proof :
first the rank one hypothesis allows for some simplification,
as compared with the classical proof given by P. Eberlein
\cite{eb1}.
Second, in the context of non-positive curvature, 
it seems that it is still unknown whether the closing lemma
holds for all non-wandering vectors.

\begin{theo}[Eberlein \cite{eb1}] Let $\tilde{M}$ be a Hadamard rank one manifold, and $M=\tilde{M}/\Gamma$,
with $\Gamma$ a discrete subgroup of $Isom(\tilde{M})$. 
Let $C\subset T^1 M$ be a compact set of rank one vectors and $\varepsilon>0$. 
Then there exists $\delta>0$ and $T>0$ with the following property:\\
If there exists $t\ge T$ and $v\in C$ such that $D(g^t v,v)<\delta$, then
there exists $t'\in ]t-\varepsilon,t+\varepsilon[$ and $v'$ a rank one vector
    such that  $g^{t'}v'=v'$ and $D(g^s v,g^s v')\le\varepsilon$ for all $0\le
    s\le \min(t,t')$. 
\end{theo}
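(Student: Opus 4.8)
The plan is to work in the Hadamard universal cover $\tilde M$ and use the geometry of rank one geodesics: each rank one geodesic has distinct, non-asymptotic endpoints at infinity and the "interior" of its boundary pair contributes a hyperbolic-like contraction. First I would lift the situation: given $v\in C$ with $D(g^tv,v)<\delta$, choose lifts $\tilde v$ of $v$ and $\gamma\in\Gamma$ so that the geodesic from $p=\pi(\tilde v)$ to $\gamma p$ (where $p$ is the footpoint, moved along by time roughly $t$) realizes that $\gamma$ is the deck transformation nearly conjugating the geodesic to itself. The candidate periodic vector $v'$ should be the axis of $\gamma$: I would show that for $\delta$ small and $T$ large, $\gamma$ is a hyperbolic (axial) isometry whose axis $\tilde\sigma$ passes very close to $\tilde v$ and whose translation length $t'=\ell(\gamma)$ lies in $]t-\varepsilon,t+\varepsilon[$. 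Projecting the axis down gives the closed geodesic; the rank one condition on $C$ (and openness of the rank one locus) guarantees $v'$ is again rank one.

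The key steps, in order: (1) uniform estimates on $C$ — since $C$ is compact and consists of rank one vectors, there are uniform constants (a uniform bound on the "width" of the flat strip, which is zero, made quantitative via a uniform lower bound on the angle between stable and unstable horospheres, or equivalently a uniform contraction rate along pieces of geodesics staying near $C$); these come from compactness plus the characterization of rank one vectors. (2) Produce the isometry $\gamma$: from $D(g^tv,v)<\delta$ get $\tilde v$ and $\gamma$ with $d(\gamma \tilde p_t,\tilde p_0)$ and the angle defect both $O(\delta)$, where $\tilde p_0,\tilde p_t$ are the relevant footpoints. (3) Show $\gamma$ is axial: by convexity of the displacement function $x\mapsto d(x,\gamma x)$ on $\tilde M$, and because its value at $\tilde p_0$ is small while along the geodesic it cannot stay small forever (using step (1): a genuinely rank one configuration forces the displacement function to grow, so the infimum is attained), $\gamma$ has an axis $\tilde\sigma$. (4) Fellow-traveling: use convexity of $t\mapsto d(\tilde c(t),\tilde\sigma)$ together with the fact that it is $O(\delta)$ at two times a definite distance ($\ge T$) apart to conclude it is $O(\varepsilon)$ on the whole interval $[0,\min(t,t')]$; this also pins down $t'=\ell(\gamma)$ to within $\varepsilon$ of $t$. (5) Pass back down to $T^1M$ and invoke openness of the rank one locus to see $v'$ is rank one, adjusting $\delta$ so the closed orbit lands in the rank one set.

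The main obstacle is step (3) together with the quantitative input (1): in non-positive curvature a near-recurrent geodesic segment could, a priori, be shadowing a flat strip, in which case $\gamma$ would be parabolic or a Clifford translation and there would be no closed geodesic nearby. The rank one hypothesis on the compact set $C$ is exactly what rules this out, but turning "no parallel Jacobi fields" into a uniform, quantitative growth estimate for the displacement function $d(\cdot,\gamma\cdot)$ along the geodesic is the delicate point — one must use compactness of $C$ to get a uniform time $T$ after which the displacement has provably increased by a definite amount, forcing $\gamma$ to be axial. Once that uniform estimate is in hand, steps (2), (4), (5) are soft consequences of convexity of distance functions in Hadamard manifolds and continuity, and I would only sketch those.
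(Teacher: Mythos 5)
Your outline differs from the paper's proof in a way that leaves the crucial step unaddressed, and moreover misstates the quantity you would need to control. The paper does not try to make the rank one hypothesis quantitative; it argues \emph{by contradiction}. It assumes a sequence $v_n\in C$, $t_n\to\infty$, $D(v_n,g^{t_n}v_n)\to 0$ with no nearby closed orbits, extracts a limit $v\in C$, lifts to $\tilde M$ to get deck transformations $\varphi_n$, and then works on the boundary at infinity: it shows $\varphi_n^{-1}(p)\to \gamma_{\tilde v}(+\infty)=:x$ and $\varphi_n(p)\to\gamma_{\tilde v}(-\infty)=:y$, chooses neighbourhoods $V_k(x),V_k(y)\subset\partial\tilde M$ (homeomorphic to balls) as furnished by a rank one lemma of Ballmann, checks that $\varphi_n^{-1}(\overline{V_k(x)})\subset V_k(x)$ and $\varphi_n(\overline{V_k(y)})\subset V_k(y)$ for $n$ large, and applies the \emph{Brouwer fixed point theorem} to obtain two fixed points $x_n,y_n$ of $\varphi_n$ on the boundary. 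Ballmann's lemma then supplies a geodesic from $y_n$ to $x_n$ passing within $1/k$ of the footpoint of $\tilde v$; this geodesic is the axis of $\varphi_n$, and the vector on it closest to $\tilde v$ is the desired periodic point, with period close to $t_n$. The rank one hypothesis enters only through this one qualitative lemma about connecting nearby pairs of boundary points by geodesics close to a given rank one geodesic.

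The gap in your plan is precisely your step (3), which you yourself flag as the delicate point but then leave as an assertion. Two concrete problems: first, the displacement function $x\mapsto d(x,\varphi_n x)$ is \emph{not} small near the footpoint $\tilde p_0$ --- it is approximately $t_n$, which is large. Convexity together with the near-recurrence only gives that the displacement is $\le t_n + O(\delta)$ along the segment $[\tilde p_0,\tilde p_{t_n}]$, so the function is roughly constant there; it certainly attains a minimum on that compact segment, but that says nothing about whether the \emph{global} infimum over $\tilde M$ is attained, which is what you need to conclude $\varphi_n$ is axial rather than parabolic. Second, the assertion that ``a genuinely rank one configuration forces the displacement function to grow'' is exactly the quantitative estimate that is hard to extract directly from the pointwise condition ``no parallel Jacobi fields,'' and no mechanism is offered for it. The paper's route via the boundary dynamics and Brouwer's theorem sidesteps this entirely: axiality is obtained from two fixed points at infinity plus a connecting geodesic, not from a growth estimate of the displacement function. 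If you wish to pursue the displacement-function route, you would need to prove something like a uniform-in-$C$ version of Ballmann's lemma anyway, at which point you might as well use the fixed-point argument.
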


 Here, we denote by $D$ the
natural distance on $T^1 M$ (and $T^1\tilde{M}$) 
induced by the riemannian metric on $M$.

\begin{proof}
Assume by contradiction that there exists a compact set 
$C\subset T^1 M$  of rank one vectors, 
$\varepsilon>0$, and a sequence $(v_n)_{n\in\N}$ of vectors of $C$, 
together with a sequence $t_n\rightarrow \infty$,
such that $D(v_n,g^{t_n}v_n)\to 0$, without periodic orbits $w_n$ of 
period $\omega_n$ $\varepsilon$-close to $t_n$ staying in their $\varepsilon$-neighbourhood
during the time $t_n$. 
By compactness, we can assume that $v_n$ converges to 
some $v\in C$, as $n$ goes to infinity. 

\quad

Lift the situation to $\tilde{M}$. There exists a compact set
$\tilde{C}\subset T^1\tilde{M}$,
$\varepsilon>0$, a sequence 
$(\tilde{v}_n)_{n\in\N}\in (\tilde{C})^{\N}$ of rank
one vectors converging to $\tilde{v}$,   
$t_n\ge n$ and $\varphi_n\in \Gamma$, such that
$D(\tilde{v}_n,d\varphi_n\circ g^{t_n}\tilde{v}_n)\le \frac{1}{n}$.

\quad 
Moreover, there exists no vector $\tilde{w}_n$ such that 
$D(\tilde{w}_n,\tilde{v}_n)\le\varepsilon/2$ 
and  $d\varphi_n\circ g^{\omega_n}\tilde{w}_n=\tilde{w}_n$ 
for some $\omega_n\in ]t_n-\varepsilon,t_n+\varepsilon[$. 
Here is a key geometric point:
the fact that on $T^1 M$, the orbits $g^sw_n$ and $g^sv_n$ are $\varepsilon$-close 
during the time $t_n$ is
expressed by the fact that the isometry $\varphi_n$ which sends
$g^{\omega_n}\tilde{w}_n$ on $\tilde{w}_n$ is the same which sends
$g^{t_n}\tilde{v}_n$ close to~$\tilde{v}_n$.

\quad
Indeed, as $\omega_n=t_n\pm\varepsilon$ and $\varphi_n$ is an isometry, the distance between
$g^{\omega_n}\tilde{w}_n$ and $g^{t_n}\tilde{v}_n$ is very close from the distance
between $\tilde{w_n}$ and $\tilde{v}_n$, which is  very small. 
By convexity of the riemannian distance on the Hadamard manifold $\tilde{M}$, 
we deduce that $\gamma_{\tilde{v_n}}(s)$
and $\gamma_{\tilde{w}_n}(s)$ stay close one another for $0\le s\le
\min(t_n,s_n)$. 
It also implies that $D(g^s \tilde{v}_n,g^s\tilde{w}_n)$ is small.  

\quad

Now, the idea of the proof is to show that for $n$ big enough, 
$\varphi_n$ is an axial isometry, and to
find on its axis a periodic vector $\tilde{w}_n$ 
converging to $v$, in contradiction
with the above assumption. 

\quad
Denote by $\gamma_{\tilde{v}}$  the geodesic determined by $\tilde{v}$. 
 Let $p$ (resp. $p_n$, $q_n$) be the base point of $\tilde{v}$
(resp. $\tilde{v}_n$, $g^{t_n}\tilde{v}_n$). 
As $\varphi_n^{-1}(p_n)$ is close to $\gamma_{\tilde{v_n}}(t_n)$, 
and $p_n\to p$, $v_n\to v$,
 it is easy to  check that 
$\varphi_n^{-1}(p)\to \gamma_{\tilde{v}}(+\infty)$. 
Similarly, one gets
 $\varphi_n(p)\to\gamma_{\tilde{v}}(-\infty)$.

We need the following lemma. 

\begin{lem}\cite[lemma 3.1]{ballmann}
 Let $\tilde{M}$ be a Hadamard rank one manifold,  $\tilde{v}\in T^1\tilde{M}$ a 
  regular vector, $x=\gamma_{\tilde{v}}(+\infty)$ and
  $y=\gamma_{\tilde{v}}(-\infty)$.
For all $\eta>0$, there exist neighbourhoods $V_{\eta}(x)$ and
  $V_{\eta}(y)$ of $x$ and $y$ respectively, such that for all $x'\in
  V_{\eta}(x)$, $y'\in V_{\eta}(y)$, there exists a geodesic
  $\gamma'$ joining $y$ to $x$ such that $d(\gamma_{\tilde{v}}(0),\gamma')\le
  \eta$. 
\end{lem}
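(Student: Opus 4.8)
The plan is to argue by contradiction, using the rank one hypothesis only through two standard properties of the rank one geodesic $\sigma:=\gamma_{\tilde v}$ (with $p:=\sigma(0)$): $\sigma$ is not contained in any flat strip of positive width, and $\sigma$ is a \emph{contracting} geodesic, i.e.\ the nearest-point projection onto $\sigma$ of any subset of $\tilde M$ lying outside a fixed tube about $\sigma$ has uniformly bounded diameter (Eberlein \cite{eb1}, Ballmann \cite{ballmann}). Reading the conclusion as ``there is a geodesic joining $y'$ to $x'$'', suppose it fails for some $\eta>0$: then there are sequences $x_n\to x$ and $y_n\to y$ in $\tilde M\cup\partial\tilde M$ for the cone topology such that no geodesic from $y_n$ to $x_n$ meets $\bar B(p,\eta)$ (vacuously so when no such geodesic exists). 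The goal is to contradict this by producing, for $n$ large, an honest geodesic line from $y_n$ to $x_n$ passing within $\eta$ of $p$.

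First I would set up an exhaustion. Let $\rho_n^{\pm}$ be the unit-speed rays from $p$ to $x_n$ and to $y_n$, let $\rho^{\pm}$ be the corresponding rays of $\sigma$, so $\rho_n^{\pm}\to\rho^{\pm}$ uniformly on compacta, and for $R>0$ let $c_{n,R}$ be the geodesic segment from $\rho_n^{-}(R)$ to $\rho_n^{+}(R)$. Since $z\mapsto d(z,\sigma)$ is convex along any geodesic, its values on $c_{n,R}$ are controlled by those at the endpoints, and since $\pi_\sigma(c_{n,R})$ is a connected sub-arc of $\sigma$ whose extremities project near $\sigma(-R)$ and $\sigma(R)$, it contains $p$; hence $c_{n,R}$ has a point projecting to $p$. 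The key step, where the rank one hypothesis does the real work and which I expect to be the main obstacle, is to let $R\to\infty$ (for fixed large $n$) and show the segments $c_{n,R}$ do not escape to infinity: this is precisely what fails for a non-rank-one geodesic --- e.g.\ in a flat half-cylinder two close but non-asymptotic ideal points may not be joined at all --- and one proves it by playing the uniformly bounded diameter of $\pi_\sigma$ against the fact that the extremities of $c_{n,R}$ project near $\sigma(\mp R)$, which also forces $c_{n,R}$ to fellow-travel $\sigma$ on an interval whose length grows with $n$. By Arzel\`a--Ascoli this yields, for each large $n$, a geodesic line $\gamma_n$ from $y_n$ to $x_n$ with $d(p,\gamma_n)$ bounded uniformly in $n$ and fellow-travelling $\sigma$ on a time interval of length tending to infinity.

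It then remains to pass to the limit in $n$. Reparametrising $\gamma_n$ by its point nearest $p$, which lies in a fixed compact ball, and extracting a subsequence, $\gamma_n\to\gamma_\infty$ uniformly on compacta, and the fellow-travelling property forces $\gamma_\infty$ to lie at finite Hausdorff distance from $\sigma$, hence to be bi-asymptotic to it, with endpoints $x$ and $y$. By the Flat Strip Theorem the convex hull of $\sigma\cup\gamma_\infty$ is a flat strip, necessarily of width $0$ since $\sigma$ is rank one, so $\gamma_\infty$ coincides with $\sigma$ up to a time shift; in particular $d(p,\gamma_\infty)=0$. Therefore $d(p,\gamma_n)\to 0$, so for $n$ large the line $\gamma_n$ from $y_n$ to $x_n$ meets $B(p,\eta)$, contradicting the choice of the sequences; the lemma follows. (A shorter route bypasses the exhaustion entirely: translate ``$x'$ near $x$'' and ``$y'$ near $y$'' into unit vectors near $\tilde v$, glue them by the rank one asymptoticity lemma already used above, Knieper \cite{kni}, to obtain a vector $\tilde v'$ near $\tilde v$ whose geodesic runs from $y'$ to $x'$, and observe that its base point $\gamma_{\tilde v'}(0)$ is then within $\eta$ of $p=\gamma_{\tilde v}(0)$.)
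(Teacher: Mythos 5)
The paper does not actually prove this lemma: it cites Ballmann, \emph{Lectures on spaces of nonpositive curvature}, Lemma III.3.1, and only remarks that the statement is easy in pinched negative curvature and can fail for singular vectors. (Note also that the statement as printed has a typo — $\gamma'$ should join $y'$ to $x'$, not $y$ to $x$; you read it the intended way.)

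Your outline captures the correct general shape of Ballmann's argument — contradiction, exhaustion by geodesic segments, Arzel\`a--Ascoli, and the Flat Strip Theorem to force the limit to coincide with $\sigma=\gamma_{\tilde v}$. But as written there are genuine gaps. (i) You take as an input that a rank one geodesic is ``strongly contracting'' in the sense that nearest-point projection of any set outside a fixed tube about $\sigma$ has uniformly bounded diameter. This is \emph{not} a consequence of the rank one (no flat strip) hypothesis; it is a strictly stronger property, and neither Eberlein nor Ballmann asserts it for arbitrary rank one geodesics. Ballmann's proof needs only ``no flat strip of width $R$'' and produces no contraction constant, so a proof resting on this claim is resting on something unproved (and, I believe, false in general). (ii) You yourself flag the crucial step — that the segments $c_{n,R}$ do not escape to infinity as $R\to\infty$ for fixed large $n$ — as ``the main obstacle'', and the justification offered does not hold up: for fixed $n$ and $R$ large, the endpoints $\rho^{\pm}_n(R)$ do \emph{not} project near $\sigma(\mp R)$; once $R$ exceeds the scale at which the ray $\rho^{+}_n$ separates from $\sigma$, its projection onto $\sigma$ stabilizes at a bounded distance from $p$ depending only on $n$. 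So both the claim that $\pi_\sigma(c_{n,R})$ contains $p$ for all $R$, and the fellow-travelling argument, fail in the regime you actually need. (iii) The ``shorter route'' via Knieper's Lemma~1.3 is circular: that lemma (the local product structure for rank one vectors) is essentially a reformulation of Ballmann's Lemma~3.1 and is proved from it, so invoking it here begs the question. The strategy is the right one, but the heart of the proof — controlling the segments uniformly in $R$ using only the no-flat-strip hypothesis — is precisely what is missing.
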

This lemma is easy to prove on negatively curved manifolds with curvature
bounded by above by $-a^2<0$. 
A similar result is also true \cite{eb1} for regular vectors
on higher rank symmetric spaces. 
If $\tilde{v}$ is a singular vector on a rank one manifold, this lemma is
probably almost always false. However it does not mean that a closing lemma
could not be proven in another way for such vectors. \\

Let us now finish the proof of the closing lemma. 
Let $V_k(x)$ and $V_k(y)$ be the
neighbourhoods given by the above lemma for $\eta=\frac{1}{k}$.
 We can also assume that they are
homeomorphic to balls of $\R^{d-1}$ and their closures are homeomorphic to
closed balls of $\R^{d-1}$ (with $d=\dim \tilde{M}$). 
(Endowed with the cone topology, the boundary $\partial \tilde{M}$ is
homeomorphic to the $d-1$-dimensional sphere, \cite{eb-on}.)

\medskip

\quad 
Using the fact that $\varphi_n(p)\to y$ and $\varphi_n^{-1}(p)\to x$ one
proves easily that for $n$ big enough we have
$\varphi_n^{-1}(\overline{V_k(x)})\subset V_k(x)$ and
$\varphi_n(\overline{V_k(y)})\subset V_k(y)$. 
By the Brouwer fixed point theorem, we deduce
that $\varphi_n$  has two fixed points $x_n\in V_k(x)$ and $y_n\in V_k(y)$.
Consider a  geodesic joining $y_n$ to $x_n$ given by the above lemma. 
It is invariant by $\varphi_n$, 
which acts by translation on it. 
Let $w_n$ be the vector of this geodesic 
minimizing the distance to $\tilde{v}$, and $p'_n$ be its basepoint. 
This vector is close to
$\tilde{v}$, hence close to
$\tilde{v}_n$, 
if $n$ has been chosen big enough.

Denote by $\omega_{n}=d(p'_{n},\varphi_{n}(p'_{n}))$
the period associated to $w_n$. 
Since $p'_n$ is close to $p_n$, $\omega_n$
is close to $d(p_n,\varphi_n(p_n))$, and so close to 
$t_n$ for $n$ big enough.
Thus, we get the desired contradiction. 
\end{proof}






\begin{thebibliography}{10}

\small

\bibitem[An67]{an} D.V. Anosov. Geodesic flows on closed riemannian
manifolds with negative curvature. \emph{Proc. Steklov Inst. Math}
{\bf 90}, (1967).

\bibitem[Ba95]{ballmann} W. Ballmann. 
Lectures on spaces of nonpositive curvature.  
With an appendix by Misha Brin. 
\emph{DMV Seminar, 25. Birkhäuser Verlag, Basel} (1995).

\bibitem[BBE85]{bbe}
W. Ballmann; M. Brin; P. Eberlein.
 Structure of manifolds of nonpositive curvature. I.
\emph{  Ann. of Math.} (2) {\bf 122}  (1985),  no. 1, 171--203.

\bibitem[Bi99]{bi}
P. Billingsley.
Convergence of probability measures. 2nd ed.
Wiley Series in Probability and Statistics.
Chichester: Wiley. x, 277 p. (1999). 

\bibitem[C02]{cou3} Y. Coudene.
Une version mesurable du th\'eor\`eme de Stone-Weierstrass.
\emph{Gazette des mathematiciens}, No. {\bf91} (2002), 10--17

\bibitem[C03]{cou2} Y. Coudene.
Gibbs measures on negatively curved manifolds.
\emph{Journal of Dynamical and Control System}, 
vol.{\bf 9}, (2003), No. 1 (January) 89--101.

\bibitem[C04]{cou} Y. Coudene. Topological dynamics and
 local product structure. 
\emph{J. London Math. Soc. (2)} {\bf 69}, (2004)  no. 2, 441--456.

\bibitem[Eb72]{eb} P. Eberlein. Geodesic flows 
on negatively curved manifolds I. 
\emph{Ann. Math. II Ser.} {\bf 95} (1972), 492-510.

\bibitem[Eb96]{eb1}
P. Eberlein.
Geometry of nonpositively curved manifolds. 
Chicago Lectures in Mathematics. 
University of Chicago Press, Chicago, IL, 1996. vii+449 pp.

\bibitem[Eb-O73]{eb-on} P. Eberlein, B. O'Neill, Visibility manifolds,
 {\em  Pacific J. Maths.} Vol 46, no. 1 (1973). 

\bibitem[G91]{guivarch}
Y. Guivarc'h. 
Propri\'et\'es ergodiques, en mesure infinie, 
de certains syst\`emes dynamiques fibr\'es. 
\emph{Ergodic Theory Dynam. Systems} {\bf 9}  (1989),  no. 3, 433--453.

\bibitem[Ho36]{ho1} 
E. Hopf. Fuchsian groups 
and ergodic theory,
\emph{Trans. A.M.S.} {\bf 39}, (1936), 299-314.

\bibitem[Kn02]{kni}
G. Knieper.
Hyperbolic dynamics and Riemannian geometry.  
Handbook of dynamical systems, Vol. 1A,  453--545, 
North-Holland, Amsterdam, 2002.

\bibitem[Li71]{li} 
A. Livsic. 
Certain properties of the homology of $Y$-systems. (Russian) 
\emph{Mat. Zametki} {\bf  10} (1971), 555--564.
English translation [Math. Notes 10 (1971), 758--763].

\bibitem[0x63]{Oxtoby} 
J. C. Oxtoby. 
 On two theorems of Parthasarathy and Kakutani
  concerning the shift transformation,  (1963)
Ergodic Theory (Proc. Internat. Sympos., Tulane Univ., New Orleans, La., 1961)
  pp. 203--215 Academic Press, New York.

\bibitem[Si70]{Sigmund} 
K.  Sigmund. 
Generic properties of invariant measures for AxiomA-Diffeomorphisms 
\emph{Inventiones math.} {\bf 11}, 99-109 (1970).

\bibitem[Si72]{si}
K. Sigmund.
On the space of invariant measures for hyperbolic flows,
\emph{Amer. J. Math.} {\bf 94} (1972), 31--37.

\bibitem[Z78]{zi}
R. J. Zimmer.
Amenable ergodic group actions 
and an application to Poisson boundaries of random walks. 
\emph{Functional Analysis} {\bf 27} (1978), no. 3, 350--372.


\end{thebibliography}
\end{document}